\documentclass[11pt, a4paper, twoside, leqno]{article}
\usepackage[T1]{fontenc}
\usepackage[utf8]{inputenc}
\usepackage{lmodern}
\usepackage[margin=27mm,headheight=14pt,headsep=7mm,footskip=11mm]{geometry}
\usepackage{amsmath,amssymb,amsthm,mathtools}
\usepackage{microtype,booktabs,array,longtable,enumitem}
\usepackage[dvipsnames]{xcolor}
\usepackage{fancyhdr}
\usepackage{hyperref}
\hypersetup{colorlinks=true,linkcolor=MidnightBlue,citecolor=MidnightBlue,
 urlcolor=MidnightBlue,pdftitle={A critical weighted Fourier estimate under vanishing spherical trace},
 pdfsubject={Research draft with cited inputs and a proof provenance concordance}}

\numberwithin{equation}{section}
\newtheorem{theorem}{Theorem}[section]
\newtheorem{proposition}[theorem]{Proposition}
\newtheorem{lemma}[theorem]{Lemma}
\newtheorem{corollary}[theorem]{Corollary}

\theoremstyle{remark}
\newtheorem{remark}[theorem]{Remark}

\newcommand{\Sn}{\mathbb S^{n-1}}
\newcommand{\cR}{\mathcal R}
\newcommand{\pc}{p_{\mathrm c}}
\newcommand{\R}{\mathbb{R}}
\newcommand{\C}{\mathbb{C}}
\newcommand{\Sch}{\mathcal{S}}
\newcommand{\F}{\mathcal{F}}
\newcommand{\Tr}{\mathcal{R}}
\newcommand{\Q}{\mathcal{Q}}
\newcommand{\dd}{\,\mathrm{d}}
\newcommand{\supp}{\operatorname{supp}}
\newcommand{\Rea}{\operatorname{Re}}

\newcommand{\norm}[1]{\lVert#1\rVert}
\newcommand{\abs}[1]{\lvert#1\rvert}
\newcommand{\ip}[2]{\langle#1,#2\rangle}

\newcolumntype{P}[1]{>{\raggedright\arraybackslash}p{#1}}

\newif\ifincludeconcordance
\includeconcordancetrue

\allowdisplaybreaks[1]
\setlist[enumerate]{itemsep=0.2em,topsep=0.4em}

\allowdisplaybreaks
\let\oldthanks\thanks
\renewcommand{\thanks}[1]{%
	\renewcommand{\thefootnote}{}
	\oldthanks{#1}
	\renewcommand{\thefootnote}{\arabic{footnote}}
}

\title {\textbf{ Endpoint weighted Fourier estimate with vanishing spherical trace }	
	\author{Wenjuan Li, \hspace{0.2cm}Manli Song, \hspace{0.2cm}Hongqiang Wang, \hspace{0.2cm} Huiju Wang}
	\thanks{2020 {\it Mathematical Subject Classification}: 42B20, 42B25.}\hspace{0.2cm}
	\thanks{{\it Keywords and phrases:} Fourier estimate, endpoint,  vanishing spherical trace}
	\thanks{This work is supported by the National Key R\&D Program of China (No.2023YFA1010800); Natural Science Foundation of China (No.12301113; No.12271435); the Natural Science Basic Research Program of Shaanxi (Program No. 2026JC-YBMS-0013).}
    \thanks{Corresponding author: Manli Song}}

\begin{document}
\date{}
\maketitle
\thispagestyle{empty}
\begin{abstract}
We establish an endpoint weighted Fourier estimate under a vanishing
	restriction condition on the unit sphere. Let $n \geq 2$ and
	$p_c = 2(n+1)/(n+3)$. We prove that every
	$f \in L^{p_c}(\mathbb{R}^n)$ with zero spherical Fourier restriction
	satisfies
	$$
	\left\|
	\bigl|1-|\xi|^2\bigr|^{-1/2}\widehat{f}(\xi)
	\right\|_{L^2(\mathbb{R}^n)}
	\leq C_n \|f\|_{L^{p_c}(\mathbb{R}^n)},
	$$
	where the restriction condition is understood through the continuous
	Tomas--Stein restriction operator. This result gives a negative answer to Goldberg's
	counterexample-existence question and supplies a missing endpoint.
\end{abstract}

\section{Introduction and the main statement}
Let $n\geq2$ and $\alpha>0$. We consider the problem of determining
for which exponents $p$ and under which cancellation conditions on $f$
the weighted Fourier estimate
\begin{equation}\label{eq:general}
		\left\|
	\bigl|1-|\xi|^2\bigr|^{-\alpha}\widehat{f}(\xi)
	\right\|_{L^2(\mathbb{R}^n)}
	\leq C\|f\|_{L^p(\R^n)}
\end{equation}
holds. Here and throughout, the Fourier transform of $f$ is defined by
\begin{equation}\label{eq:fourier}
  \widehat f(\xi)=(2\pi)^{-n/2}\int_{\R^n}e^{-ix\cdot\xi}f(x)\,dx.
\end{equation}
The essential issue is the singularity along the unit sphere
$\Sn$. For $\alpha\geq\tfrac12$, the weight
$|1-|\xi|^2|^{-2\alpha}$ is not locally integrable across the sphere,
so \eqref{eq:general} cannot hold for arbitrary Schwartz functions.
This leads to the question of whether the natural cancellation condition
\begin{equation}\label{eq:vanishing}
	\widehat f\big|_{\Sn}=0
\end{equation}
is sufficient to recover a bounded operator. The corresponding localized
problem, with the integral restricted to a fixed neighborhood of $\Sn$,
isolates this question from the separate constraints at high frequencies.

One motivation comes from the solvability of constant-coefficient
differential equations whose symbols vanish on a hypersurface.
For the Helmholtz equation
\[
(-\Delta-1)u=f,
\]
the Fourier transform gives
$\widehat u(\xi)=(|\xi|^2-1)^{-1}\widehat f(\xi)$.
Thus \eqref{eq:general} with $\alpha=1$ is exactly the Fourier-side
estimate needed for an $L^2$ solution. The condition
\eqref{eq:vanishing} expresses compatibility of the data with the
characteristic sphere. In Agmon's weighted $L^2$ theory \cite{Agmon},
spatial decay supplies regularity of $\widehat f$ that can be combined
with a vanishing trace. The $L^p$ formulation asks whether the geometry
of the characteristic set and Fourier-side cancellation can instead
provide the required control through a translation-invariant norm.
This question is also related to the limiting absorption principle for
Schr\"odinger operators; see \cite{Goldberg,IonescuSchlag}.

A second motivation is the mapping theory of negative-order
Bochner--Riesz operators. With the convention
\[
\widehat{S^{-\alpha}f}(\xi)
=\mathbf 1_{\{|\xi|<1\}}(1-|\xi|^2)^{-\alpha}\widehat f(\xi),
\]
an estimate of the form \eqref{eq:general} immediately implies an
$L^p$--$L^2$ bound for $S^{-\alpha}$ on the class of admissible data.
For $0<\alpha<\tfrac12$, such bounds are available without
\eqref{eq:vanishing} in the range
$1\leq p\leq2(n+1)/(n+1+4\alpha)$; see \cite{BMO,Goldberg}.
At and above $\alpha=\tfrac12$, cancellation becomes indispensable.
The problem is therefore to understand how restricting the domain
of the multiplier to a Fourier restriction kernel changes its mapping
properties.

Goldberg \cite{Goldberg} developed this approach for the Helmholtz
equation and Bochner--Riesz multipliers. In particular, his
Theorem~3 establishes
\begin{equation}\label{eq:goldberg}
	\|S^{-\alpha}f\|_{L^2(\R^n)}
	\lesssim \|f\|_{L^p(\R^n)},
	\qquad \widehat f\big|_{\Sn}=0,
\end{equation}
for
\[
\frac12\leq\alpha<\frac32,\qquad
1\leq p\leq\frac{2(n+1)}{n+1+4\alpha},
\]
with the explicit exception
\begin{equation}\label{eq:endpoint}
	(\alpha,p)=\left(\frac12,\frac{2(n+1)}{n+3}\right).
\end{equation}
For $\tfrac12<\alpha<\tfrac32$, his Proposition~4 also controls
the two-sided singularity in a neighborhood of the sphere.
His Helmholtz result gives the global estimate at $\alpha=1$
in the range
\[
\max\left(1,\frac{2n}{n+4}\right)\leq p
\leq\frac{2(n+1)}{n+5},
\qquad n\geq3,\quad(n,p)\neq(4,1).
\]
These results show that the vanishing restriction can compensate for
a nonintegrable singularity, but leave open the critical transition
\eqref{eq:endpoint}.

Goldberg explicitly raised this endpoint in
\cite[Remark~3]{Goldberg}. In the subsequent problem collection
\cite[Problem~4]{Problems}, he asked whether there exists
$f\in L^{2(n+1)/(n+3)}(\R^n)$ such that
\[
\widehat f\big|_{\Sn}=0,
\qquad
|1-|\xi|^2|^{-1/2}\widehat f(\xi)\notin L^2(\R^n).
\]
The subtlety is that a vanishing restriction does not directly prescribe
a rate of decay toward the sphere. Writing
$A_f(\lambda)=\|\widehat f(\lambda\,\cdot)\|_{L^2(\Sn)}$, the endpoint requires
control of
\[
\int_{1-\varepsilon}^{1+\varepsilon}
\frac{A_f(\lambda)^2}{|\lambda-1|}\,d\lambda.
\]
Continuity of $A_f$ and the condition $A_f(1)=0$ alone do not guarantee
that this integral is finite. Moreover, the estimates with
$\alpha>\tfrac12$ apply at strictly smaller upper $p$-exponents and
do not directly yield the desired endpoint.

Our main result establishes this endpoint estimate in every dimension
$n\geq2$. Set
\[
\pc=\frac{2(n+1)}{n+3},
\qquad
X_n=\{f\in L^{p_c}:\mathcal{R}f:=\widehat{f}|_{\mathbb{S}^{n-1}}=0\}.
\]

\begin{theorem}\label{thm:main}
	Let $n\geq2$. There exists a constant $C_n$ such that every $f\in X_n$
	satisfies
	\begin{equation*}\label{eq:main}
		\left\|
	\bigl|1-|\xi|^2\bigr|^{-1/2}\widehat{f}(\xi)
	\right\|_{L^2(\mathbb{R}^n)}
		\leq C_n\|f\|_{L^{\pc}(\R^n)}.
	\end{equation*}
\end{theorem}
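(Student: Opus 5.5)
The plan is to split $\R^n$ in frequency into a region away from the sphere, which is handled by Tomas--Stein and scaling, and a neighbourhood of $\Sn$, which is the real endpoint problem. Fix a radial cutoff $\chi$ supported in $\{\,\abs{\abs{\xi}-1}<1/2\,\}$ and equal to $1$ near $\Sn$. In polar coordinates write
\[
\norm{\abs{1-\abs{\xi}^2}^{-1/2}\widehat f}_{L^2}^2=\int_0^\infty \frac{\lambda^{n-1}A_f(\lambda)^2}{\abs{1-\lambda^2}}\,d\lambda .
\]

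For the region away from the sphere, I would first prove the scaled Tomas--Stein bound at the critical exponent. Put $g(x)=f(x/\lambda)$ and apply $\norm{\widehat g|_{\Sn}}_{L^2(\Sn)}\le C\norm{g}_{L^{\pc}}$. This gives
\[
\lambda^{n-1}A_f(\lambda)^2\le C\lambda^{(n-1)/(n+1)}\norm{f}_{L^{\pc}}^2 .
\]
Since $(n-1)/(n+1)-2<-1$, integrating this against $\abs{1-\lambda^2}^{-1}\sim\lambda^{-2}$ converges at infinity. It also converges near $\lambda=0$, where the weight is bounded. So the part of the integral outside $\supp\chi$ is $\lesssim\norm{f}_{L^{\pc}}^2$, with no use of the vanishing trace.

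The remaining task is to show
\[
\int_{1/2}^{3/2}\frac{A_f(\lambda)^2}{\abs{\lambda-1}}\,d\lambda\lesssim\norm{f}_{L^{\pc}}^2 \quad\text{for } f\in X_n .
\]
I would decompose dyadically, $\abs{\lambda-1}\sim 2^{-j}$ for $j\ge1$, with smooth radial multipliers $m_j(\xi)=2^{j}\varphi\bigl(2^{j}(\abs{\xi}-1)\bigr)$. Each term is a quadratic form $Q_j(f)=\ip{f}{T_{m_j}f}$. The kernel of $T_{m_j}$ is essentially $\abs{x}^{-(n-1)/2}e^{\pm i\abs{x}}$ localised to $\abs{x}\lesssim 2^{j}$, and Tomas--Stein gives $Q_j(f)\lesssim\norm{f}_{L^{\pc}}^2$ uniformly in $j$. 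Summing naively produces exactly the logarithmic divergence that Goldberg left open.

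The vanishing trace has to enter here. Since $\Tr f=0$, I may replace the multiplier $\sum_j m_j$ by its finite-part renormalisation
\[
\sum_j\Bigl(m_j-c_j\,\delta(\abs{\xi}-1)\Bigr),\qquad c_j=\int m_j\,d\lambda ,
\]
without changing the quadratic form. Each renormalised piece has a kernel with an extra cancellation: the difference between $T_{m_j}$ and the averaged operator $c_j\,\widehat{\sigma}\ast$ is small on $\abs{x}\ll 2^{j}$. I would then run a Bourgain/Keel--Tao style bilinear interpolation on the forms $\ip{f_k}{T_j f_l}$, where $f=\sum_k f_k$ is the level-set decomposition of $f$ into pieces with $\abs{f_k}\sim 2^{k}$. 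For each pair $(j;k,l)$ I would interpolate two bounds:
\begin{enumerate}
\item an $L^{1}\to L^\infty$ bound decaying like $2^{-j(n-1)/2}$, coming from the renormalised kernel;
\item an $L^2\to L^2$ bound of size $O(1)$.
\end{enumerate}
The aim is a gain $2^{-\varepsilon\abs{j-\phi(k,l)}}$, where $\phi(k,l)$ is the scale at which the two bounds balance. That gain makes the triple sum converge and yields strong type at $\pc$, not merely restricted weak type.

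The key and hardest step is to extract this off-diagonal decay from the condition $\Tr f=0$. If that turns out too weak at the level of single level-set pieces $f_k$, which need not have vanishing trace, I would try instead to quantify the cancellation directly on the radial profile. Concretely, I would bound $A_f(\lambda)^2=A_f(\lambda)^2-A_f(1)^2$ through $\ip{f}{(T_{\delta_\lambda}-T_{\delta_1})f}$. The kernel of $T_{\delta_\lambda}-T_{\delta_1}$ is $\widehat\sigma(\lambda x)-\widehat\sigma(x)$, which gains a factor $\min(1,\abs{\lambda-1}\abs{x})$. Feeding that gain into the same dyadic interpolation should convert the $\sum_j 1$ divergence into a convergent sum.
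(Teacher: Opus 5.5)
Your treatment of the region away from the sphere is correct. Scaled Tomas--Stein gives $\lambda^{n-1}A_f(\lambda)^2\lesssim\lambda^{(n-1)/(n+1)}\norm f_{L^{\pc}}^2$, which is integrable against $\abs{1-\lambda^2}^{-1}$ off $[1/2,3/2]$; the paper uses H\"older plus Hausdorff--Young there instead, and either works. The gap is the near-sphere estimate, which is the whole theorem, and the mechanism you propose cannot close it. Subtracting $c_j\delta(\abs\xi-1)$ is invisible only in the diagonal form $\ip{f}{\cdot\,f}$ of the full zero-trace function. Once you split $f=\sum_k f_k$, the cross terms $c_j\ip{\Tr f_k}{\Tr f_l}_{L^2(\Sn)}$ do not vanish, and on such inputs the renormalised form is worse than the original. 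For $\abs x\gtrsim 2^j$ the kernel $\widehat{m_j}-c_j\widehat\sigma$ is just $-c_j\widehat\sigma(x)$, so the summed kernel behaves like $-\log\abs x\,\widehat\sigma(x)$ up to constants. A Knapp example makes this quantitative. Let $\widehat f$ be a bump adapted to a $\delta^{1/2}\times\cdots\times\delta^{1/2}\times\delta$ slab straddling $\Sn$. Then $\norm{\Tr f}_{L^2(\Sn)}^2\sim\norm f_{L^{\pc}}^2$, every $j$ with $2^{-j}\gg\delta$ contributes exactly $-c_j\norm{\Tr f}_{L^2(\Sn)}^2$, and the renormalised form has size $\log(1/\delta)\norm f_{L^{\pc}}^2$. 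Level-set pieces $f_k$ can be exactly such inputs, so no piece-by-piece interpolation can produce the gain $2^{-\varepsilon\abs{j-\phi(k,l)}}$. The two bounds you plan to interpolate are also unavailable: $\norm{T_{m_j}}_{L^2\to L^2}=\norm{m_j}_\infty\sim 2^j$, and after adding the $\delta(\abs\xi-1)$ term the operator is not bounded on $L^2$ at all. Your fallback fails for a related reason. Knapp bumps placed in the zero-trace slab $\{\delta/2<\abs\xi-1<\delta\}$ satisfy $A_f(1+\delta)^2\sim\norm f_{L^{\pc}}^2$ for every $\delta$. Hence there is no $f$-uniform decay of $A_f(\lambda)^2$ as $\lambda\to1$ to integrate against $d\lambda/\abs{\lambda-1}$. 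The gain $\min(1,\abs{\lambda-1}\abs x)$ is useless at $\abs x\gtrsim\abs{\lambda-1}^{-1}$, which is exactly where that mass sits. Only the integrated, scale-invariant quantity is bounded.

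The missing idea is to spend the cancellation once, on the whole function, through a renormalisation adapted to the data's own scale. In the normal frequency variable, your finite-part multiplier $\sum_j(m_j-c_j\delta)$ is the kernel $-\frac1\pi\log\abs{t-u}$ plus a fixed constant, and the Knapp computation shows a fixed constant is the wrong choice. The paper works on graph patches with $g(t)=e^{-it\widetilde\phi(D_y)}h(\cdot,t)\in L^2(\R^{n-1})$, so that $\F_y\F_tg(\eta,s)=\widehat h(\eta,\widetilde\phi(\eta)+s)$. The zero trace then becomes $\int g\,dt=0$, and the energy equals $-\frac1\pi\Rea\iint\log\abs{t-u}\,\ip{g(t)}{g(u)}\,dt\,du$. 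Mean zero lets you add any integrable separable term $c(t)+c(u)$ to this kernel. Taking $c=\frac{\pc}{2}\log\bigl(\norm{h_t}_{L^{\pc}}/\norm h_{L^{\pc}}\bigr)$ turns the kernel into the scale-invariant $\log\bigl(\abs{t-u}F(t)^{\pc/2}F(u)^{\pc/2}\bigr)$. The dispersive bound $\abs{\ip{g(t)}{g(u)}}\lesssim\abs{t-u}^{-(n-1)/(n+1)}A(t)A(u)$ and a logarithmic Hardy--Littlewood--Sobolev inequality then finish the local estimate. That inequality is ordinary HLS at the exponents $\beta_c\pm\varepsilon$, and its bookkeeping closes exactly because $\pc=2/(2-\beta_c)$. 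To reach general $f\in X_n$ you would also need, as the paper does, a density theorem for zero-trace Schwartz functions together with Fatou's lemma, since ordinary approximation destroys exact zero trace.
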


The integral is an ordinary nonnegative Lebesgue integral, with
$\widehat f$ understood as its Hausdorff--Young representative.
The theorem gives a negative answer to Goldberg's
counterexample-existence question and supplies the missing endpoint
in \eqref{eq:goldberg}.


\begin{remark} It is easy to get the characterization of the restriction kernel:
	\label{cor:kernel}
	\[
	f\in X_n
	\quad\Longleftrightarrow\quad
	|1-|\xi|^2|^{-1/2}\widehat f(\xi)\in L^2(\R^n).
	\]
Sufficiency is Theorem~\ref{thm:main}. The Stein--Tomas theorem, scaling, and approximation by Schwartz functions imply that $\lambda\mapsto\widehat f(\lambda\,\cdot)$ has a continuous $L^2(\Sn)$-valued representative for $\lambda>0$, agreeing with the polar slices of $\widehat f$ for almost every $\lambda$. If $\cR f\neq0$, its norm is bounded below by a positive constant for all $\lambda$ sufficiently close to $1$. Polar coordinates then give divergence of the weighted integral, since
$\int_{1-\varepsilon}^{1+\varepsilon}|\lambda-1|^{-1}\,d\lambda=\infty$.
\end{remark}

We next record two direct consequences of
Theorem~\ref{thm:main}. They describe, respectively, the endpoint Bochner--Riesz bounds, and
the dependence of fractional division estimates on the energy.

\begin{corollary}
	\label{cor:br}
	For every $f\in X_n$,
	\begin{equation*}\label{eq:br}
			\|S^{-\alpha}f\|_{L^2(\R^n)}
		\lesssim_n\|f\|_{L^{\pc}}.
	\end{equation*}
\end{corollary}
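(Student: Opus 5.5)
The corollary is a pointwise comparison of Fourier multipliers followed by Plancherel and Theorem~\ref{thm:main}. I read $S^{-\alpha}$ at the endpoint value $\alpha=\tfrac12$, which is the only value for which $\pc$ is the natural exponent; the remark below covers other values of $\alpha$.

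\emph{Step 1: the case $\alpha=\tfrac12$.} For $f\in X_n$ we have $f\in L^{\pc}$ with $\pc\le 2$. By Hausdorff--Young, $\widehat f$ is a well-defined function in $L^{\pc'}$. Hence $\widehat{S^{-1/2}f}=\mathbf 1_{\{|\xi|<1\}}(1-|\xi|^2)^{-1/2}\widehat f$ is a measurable function, and it is square integrable once we prove the bound. On $\{|\xi|<1\}$ we have $(1-|\xi|^2)^{-1/2}=\bigl|1-|\xi|^2\bigr|^{-1/2}$, and off this set the multiplier vanishes. This gives the pointwise bound
\[
\bigl|\widehat{S^{-1/2}f}(\xi)\bigr|\le \bigl|1-|\xi|^2\bigr|^{-1/2}|\widehat f(\xi)|\qquad\text{for a.e. }\xi.
\]
By Plancherel,
\[
\|S^{-1/2}f\|_{L^2}=\bigl\|\widehat{S^{-1/2}f}\bigr\|_{L^2}\le \bigl\|\,\bigl|1-|\xi|^2\bigr|^{-1/2}\widehat f\,\bigr\|_{L^2}\le C_n\|f\|_{L^{\pc}},
\]
where the last inequality is Theorem~\ref{thm:main}.

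\emph{Step 2: other values of $\alpha$.} If $0<\alpha<\tfrac12$, the bound also follows from Step 1. On the unit ball $0<1-|\xi|^2\le 1$, so $(1-|\xi|^2)^{-\alpha}\le(1-|\xi|^2)^{-1/2}$, and the same chain of inequalities applies. Alternatively, one can cite the unconditional bound of \cite{BMO,Goldberg}, since $\pc\le 2(n+1)/(n+1+4\alpha)$ in that range. For $\alpha>\tfrac12$ the exponent $\pc$ lies outside Goldberg's admissible range, so the statement should be read with $\alpha=\tfrac12$.

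\emph{Main obstacle.} The only delicate point is interpretive: making $S^{-1/2}f$ meaningful as an $L^2$ function. I would define it through its Fourier transform, which is the measurable function above. Step 1 shows this function lies in $L^2$, so $S^{-1/2}f$ is its inverse Plancherel transform. All of the analytic difficulty is contained in Theorem~\ref{thm:main}.
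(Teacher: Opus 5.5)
Your proposal is correct and is the argument the paper intends. The paper gives no separate proof: it records the corollary, at the endpoint $\alpha=\tfrac12$, as an immediate consequence of Theorem~\ref{thm:main}, via the pointwise domination $\mathbf 1_{\{|\xi|<1\}}(1-|\xi|^2)^{-1/2}\le\bigl|1-|\xi|^2\bigr|^{-1/2}$ and Plancherel (as noted in the introduction's discussion of $S^{-\alpha}$). Your extra observation for $0<\alpha<\tfrac12$, which uses $x^{-\alpha}\le x^{-1/2}$ for $0<x\le 1$, is also correct but not needed.
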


For $\lambda>0$, let $\cR_\lambda f=\widehat f(\lambda\,\cdot)|_{\mathbb{S}^{n-1}}$,
defined by continuous restriction, and introduce the fractional
division operator
\[
\widehat{T_\lambda f}(\xi)
=\bigl||\xi|^2-\lambda^2\bigr|^{-1/2}\widehat f(\xi).
\]
\begin{corollary}
	\label{cor:energy}
	For every $\lambda>0$ and every $f\in L^{\pc}(\R^n)$ satisfying
	$\cR_\lambda f=0$,
	\begin{equation*}\label{eq:energy}
		\|T_\lambda f\|_2
		\lesssim_n \lambda^{-1/(n+1)}\|f\|_{L^{\pc}}.
	\end{equation*}
	The implicit constant is independent of $\lambda$.
\end{corollary}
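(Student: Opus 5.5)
The plan is to derive Corollary~\ref{cor:energy} from Theorem~\ref{thm:main} by a dilation argument. The only genuine inputs are the scaling properties of the Fourier transform and of the $L^{\pc}$ norm, together with the compatibility of the continuous restriction operators with dilations.

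Fix $\lambda>0$ and $f\in L^{\pc}(\R^n)$ with $\cR_\lambda f=0$. Define
\[
g(x)=\lambda^{-n}f(x/\lambda),
\qquad\text{so that}\qquad
\widehat g(\eta)=\widehat f(\lambda\eta).
\]
This identity holds for the Hausdorff--Young representatives, by density of Schwartz functions and continuity of the Fourier transform from $L^{\pc}$ to $L^{\pc'}$.

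The first step is to check that $g\in X_n$, that is, $\cR g=\cR_\lambda f$. For Schwartz $f$ this is immediate from $\widehat g(\eta)=\widehat f(\lambda\eta)$ on $\Sn$. In general:
\begin{enumerate}
\item $\cR$ is bounded $L^{\pc}\to L^2(\Sn)$ by Stein--Tomas.
\item $\cR_\lambda$ is bounded $L^{\pc}\to L^2(\Sn)$ as well, being $\cR$ composed with the dilation, which is bounded on $L^{\pc}$.
\item So both sides of $\cR g=\cR_\lambda f$ are continuous in $f\in L^{\pc}$ and agree on a dense class, hence they agree for all $f$.
\end{enumerate}
Therefore $\cR g=0$, and Theorem~\ref{thm:main} applies to $g$.

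Next, compute the two scalings. A change of variables gives
\[
\|g\|_{L^{\pc}}=\lambda^{-n+n/\pc}\|f\|_{L^{\pc}}.
\]
In $\|T_\lambda f\|_2^2=\int\bigl||\xi|^2-\lambda^2\bigr|^{-1}|\widehat f(\xi)|^2\,d\xi$ substitute $\xi=\lambda\eta$. Using $\bigl||\xi|^2-\lambda^2\bigr|=\lambda^2\bigl|1-|\eta|^2\bigr|$ and $d\xi=\lambda^n\,d\eta$, this yields
\[
\|T_\lambda f\|_2=\lambda^{(n-2)/2}\left\|\bigl|1-|\eta|^2\bigr|^{-1/2}\widehat g(\eta)\right\|_{L^2}.
\]
Theorem~\ref{thm:main} bounds the last norm by $C_n\|g\|_{L^{\pc}}$, so
\[
\|T_\lambda f\|_2\le C_n\,\lambda^{\frac{n-2}{2}-n+\frac{n}{\pc}}\|f\|_{L^{\pc}}.
\]

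Finally, evaluate the exponent. Since $n/\pc=n(n+3)/(2(n+1))$,
\[
\frac{(n-2)(n+1)-2n(n+1)+n(n+3)}{2(n+1)}=\frac{-2}{2(n+1)}=-\frac1{n+1}.
\]
The constant is exactly $C_n$ from Theorem~\ref{thm:main}, so it is independent of $\lambda$.

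The only step needing care is the identification $\cR g=\cR_\lambda f$ for general $L^{\pc}$ data. I expect the density argument above to handle it routinely.
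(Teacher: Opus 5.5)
Your proposal is correct and follows the same route as the paper: dilate $f$ so that the $\lambda$-sphere becomes $\Sn$, apply Theorem~\ref{thm:main}, and track the powers of $\lambda$ through the change of variables $\xi=\lambda\eta$. The paper uses $h(x)=f(x/\lambda)$ instead of your $L^1$-normalized $\lambda^{-n}f(x/\lambda)$, which only shifts powers of $\lambda$ and gives the same exponent $-1/(n+1)$, and your density-and-continuity check that $\cR g=\cR_\lambda f$ for general $L^{\pc}$ data spells out a step the paper simply asserts.
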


\begin{proof}
	Set $h(x)=f(x/\lambda)$. Then
	$\widehat h(\eta)=\lambda^n\widehat f(\lambda\eta)$,
	$\cR h=0$, and
	$\|h\|_{\pc}=\lambda^{n/\pc}\|f\|_{\pc}$.
	The change of variables $\xi=\lambda\eta$ and
	Theorem~\ref{thm:main} give
	\[
	\begin{split}
		\big\|\bigl||\xi|^2-\lambda^2\bigr|^{-1/2}\widehat f\big\|_2
		&=\lambda^{-n/2-1}
		\big\||1-|\eta|^2|^{-1/2}\widehat h(\eta)\big\|_2\\
		&\lesssim_n\lambda^{n/\pc-n/2-1}\|f\|_{\pc}
		=\lambda^{-1/(n+1)}\|f\|_{\pc}.
	\end{split}
	\]
	Plancherel's theorem completes the proof.
\end{proof}

\subsection{Organization of the Paper}
The remainder of this paper is organized as follows.
Section~2 collects the known results used in the proof, including
the Hardy--Littlewood--Sobolev inequality, localized dispersive
estimates, the Stein--Tomas restriction theorem, the scalar
logarithmic energy identity, and the zero-trace density theorem.
Section~3 establishes the auxiliary lemmas needed for the endpoint
argument: a local Fourier representation, a Hilbert-space-valued form of the logarithmic energy formula, 
and the cancellation of separable logarithmic terms.
Section~4 combines these ingredients to prove the local endpoint
estimate on graph patches and then on spherical patches.
Section~5 assembles the local estimates, controls the contribution
away from the sphere, and extends the Schwartz-class estimate to
the full zero-trace subspace of $L^{p_c}(\mathbb{R}^n)$ by the cited
density theorem and Fatou's lemma.

\subsection{AI usage}
We used ChatGPT 6.1 for English-language editing and proofreading, as well as to explore possible approaches to parts of the proofs. The principal ideas underlying the results were suggested by ChatGPT 6.1. The authors developed, carefully verified, and rewrote all the mathematical arguments, simplifying most of the proofs in the process. The authors are responsible for this paper.

\section{Preliminaries}\label{sec:known}


The following lemma is useful for our proof.

\begin{lemma}
[Logarithmic Hardy-Littlewood-Sobolev inequality] \label{lem:loghls}  Let $\beta\in(0,1)$ and $p=\frac{2}{2-\beta}$. Suppose $F\in L^p(\mathbb{R}^2)$ is nonnegative such that $\|F\|_{L^p(\mathbb{R}^2)}=1$. Then
\begin{equation*}\label{eq:loghls}
    \int_{\mathbb{R}}\int_{\mathbb{R}}|t-u|^{-\beta}F(t)F(u)|\log(|t-u|F(t)^{p/2}F(u)^{p/2})|dtdu\lesssim_\beta 1.
\end{equation*}
For points $(t,u)$ where $F(t)F(u)=0$, we adopt the convention $0\log 0=0$.
\end{lemma}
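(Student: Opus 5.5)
We prove the inequality on $\mathbb R$, where the integrals live (the $L^p$ norm is taken on the line). The plan is to reduce it to two applications of the classical one-dimensional Hardy--Littlewood--Sobolev inequality by removing the logarithm with a power-type bound.

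\textbf{Exponent bookkeeping.} First check that $p=\frac{2}{2-\beta}$ is exactly the diagonal HLS exponent on $\mathbb R$:
\[
\frac2p+\beta=(2-\beta)+\beta=2 .
\]
Hence
\[
\int\!\!\int|t-u|^{-\gamma}G(t)G(u)\,dt\,du\lesssim_{\gamma}\|G\|_{L^q(\mathbb R)}^2
\]
holds whenever $0<\gamma<1$, $q>1$ and $\frac2q+\gamma=2$.

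\textbf{Removing the logarithm.} Fix a small $\varepsilon=\varepsilon(\beta)>0$ and use the elementary bound
\[
|\log x|\le C_\varepsilon\,(x^{\varepsilon}+x^{-\varepsilon}),\qquad x>0 .
\]
Apply it with $x=|t-u|F(t)^{p/2}F(u)^{p/2}$ at points where $F(t)F(u)>0$. Points with $F(t)F(u)=0$ contribute nothing, by the $0\log0=0$ convention. The integrand is then dominated by $C_\varepsilon$ times
\[
|t-u|^{-(\beta\mp\varepsilon)}\,F(t)^{1\pm\varepsilon p/2}\,F(u)^{1\pm\varepsilon p/2},
\]
one term for each choice of sign.

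\textbf{Applying HLS to each term.} Set
\[
G_\pm=F^{1\pm\varepsilon p/2},\qquad q_\pm=\frac{p}{1\pm\varepsilon p/2},\qquad \gamma_\pm=\beta\mp\varepsilon .
\]
Then
\[
\frac{2}{q_\pm}=\frac2p\pm\varepsilon ,\qquad\text{so}\qquad \frac2{q_\pm}+\gamma_\pm=\frac2p+\beta=2 .
\]
The HLS scaling condition is therefore preserved exactly: the $\varepsilon$ gained in the kernel is paid for by the change of integrability. We also have
\[
\|G_\pm\|_{L^{q_\pm}}^{q_\pm}=\int F^{p}=1 ,
\]
so each term is bounded by a constant depending only on $\beta$ and $\varepsilon$.

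\textbf{Where the care is needed.} The only point requiring attention is the admissibility of the HLS exponents. Since $0<\beta<1$, choosing $\varepsilon<\min(\beta,1-\beta)$ keeps $\gamma_\pm\in(0,1)$. Since $p>1$, shrinking $\varepsilon$ further ensures $q_+=p/(1+\varepsilon p/2)>1$. The other exponent $q_-$ is larger than $p$, and $q_-<\infty$ as long as $\varepsilon p/2<1$. With this choice of $\varepsilon$, summing the two bounds gives the claim with a constant depending only on $\beta$.
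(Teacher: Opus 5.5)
Your proof is correct and follows the paper's argument essentially step for step. Both apply $|\log z|<\varepsilon^{-1}(z^\varepsilon+z^{-\varepsilon})$ to $z=|t-u|F(t)^{p/2}F(u)^{p/2}$ (the paper fixes $\varepsilon=\tfrac12\min(\beta,1-\beta)$), and then two diagonal Hardy--Littlewood--Sobolev bounds with kernel exponents $\beta\mp\varepsilon$ and $F^{1\pm p\varepsilon/2}\in L^{p/(1\pm p\varepsilon/2)}(\mathbb{R})$. Your bookkeeping is also cleaner than the paper's write-up, which states the norm on $\mathbb{R}^2$, omits the square on the HLS right-hand side, and writes $a_\pm r_\pm=r$ where $p$ is meant.
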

In order to prove Lemma \ref{lem:loghls}, we need the following lemma.
\begin{lemma}\label{logz}
    For any $z,\varepsilon>0$, we have
    \begin{equation*}
        |\log z|<\varepsilon^{-1}(z^\varepsilon+z^{-\varepsilon}).
        \end{equation*}
\end{lemma}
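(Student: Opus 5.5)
The statement to prove is Lemma~\ref{logz}: for $z,\varepsilon>0$, $|\log z|<\varepsilon^{-1}(z^\varepsilon+z^{-\varepsilon})$. The plan is to compare $\log z$ with an exponential through the elementary bound $s< e^{s}$ for all real $s$.

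First I justify $s<e^s$. For $s\le 0$ it is immediate, since $e^s>0\ge s$. For $s>0$ the function $g(s)=e^s-s$ satisfies $g(0)=1$ and $g'(s)=e^s-1>0$, so $g(s)>1>0$.

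Next I substitute $s=\varepsilon|\log z|\ge0$. This gives
\[
\varepsilon|\log z|<e^{\varepsilon|\log z|}=\max(z^{\varepsilon},z^{-\varepsilon}).
\]
The maximum of two positive numbers is strictly less than their sum, so it is at most $z^\varepsilon+z^{-\varepsilon}$. Dividing by $\varepsilon>0$ gives the claim, and the inequality stays strict because the first step is strict.

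There is no real obstacle here. The only care needed is to handle $z<1$ and $z\ge1$ uniformly, which the absolute value inside the exponent does. The strict inequality holds even at $z=1$, since there the left side is $0$ and the right side is $2/\varepsilon$.
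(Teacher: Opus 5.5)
Your proof is correct and is essentially the paper's argument. Applying $s<e^s$ at $s=\varepsilon|\log z|$ is the paper's inequality $\log x<x$ applied at $x=z^{\varepsilon}$ or $x=z^{-\varepsilon}$, and your $\max(z^{\varepsilon},z^{-\varepsilon})$ simply merges the paper's two cases $z\ge 1$ and $0<z<1$ into one line.
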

\begin{proof}
If $z\geq1$, then $z^\varepsilon\geq 1$. Note
    that
    \begin{equation} \label{logx-x}
      \log x<x, \quad \forall x>0.
    \end{equation} Taking $x=z^\varepsilon$ in \eqref{logx-x}, we get
    \begin{equation*}
        |\log z|=\log z<\varepsilon^{-1}z^\varepsilon.
    \end{equation*}
If $0<z<1$, then $0<z^\varepsilon<1$. Taking $x=z^{-\varepsilon}$ in \eqref{logx-x}, we have
    \begin{equation*}
        |\log z|=-\log z<\varepsilon^{-1}z^{-\varepsilon}.
    \end{equation*}
\end{proof}
\begin{proof}[Proof of Lemma \ref{lem:loghls}]
    It suffices to consider $(t,u)$ where $F(t)F(u)>0$ and $t\neq u$.
    Taking $\varepsilon=\frac{1}{2}\min(\beta,1-\beta)$ and $z=|t-u|F(t)^{p/2}F(u)^{p/2}$ in Lemma \ref{logz}, we have
    \begin{equation*}
      \Big|\log(|t-u|F(t)^{p/2}F(u)^{p/2})\Big|<\varepsilon^{-1}\left(|t-u|^\varepsilon F(t)^{p\varepsilon/2}F(u)^{p\varepsilon/2}+|t-u|^{-\varepsilon}F(t)^{-p\varepsilon/2}F(u)^{-p\varepsilon/2}\right),
    \end{equation*}
    which gives that
    \begin{align*}
     &\int_{\mathbb{R}}\int_{\mathbb{R}}|t-u|^{-\beta}F(t)F(u)|\log(|t-u|F(t)^{p/2}F(u)^{p/2})|dtdu\\
     &\lesssim_\beta \sum_\pm \int_{\mathbb{R}}\int_{\mathbb{R}}|t-u|^{-\lambda_\pm}F(t)^{a_\pm}F(u)^{a_\pm}dtdu,
    \end{align*}
    where $\lambda_\pm=\beta\mp\varepsilon$ and $a_\pm=1\pm\frac{p\varepsilon}{2}$.
    By virtue of $\varepsilon=\frac{1}{2}\min(\beta,1-\beta)$, it is easy to check that $\lambda_\pm \in(0,1)$. By the classical Hardy-Littlewood-Sobolev inequality, we have
    \begin{equation*}
      \int_{\mathbb{R}}\int_{\mathbb{R}}|t-u|^{-\lambda_\pm}F(t)^{a_\pm}F(u)^{a_\pm}dtdu\lesssim_\beta  \|F^{a_\pm}\|_{r_\pm}=\|F\|^{a_\pm}_{{a_\pm r_\pm}},
    \end{equation*}
    where $r_\pm=\frac{2}{2-\lambda_\pm}=\frac{2}{2-\beta\pm\varepsilon}$. Since $p=\frac{2}{2-\beta}$, we obtain $a_\pm r_\pm=r$. We complete the proof.
\end{proof}

We now recall some known useful lemmas.
\begin{lemma}[Localized curved-graph dispersive estimate]
\label{known:dispersion}
Let $n\geq2$ and $U\subset\R^{n-1}$ be a bounded open set, and let $\phi$ be
a real-valued smooth function on a neighborhood of $\overline U$.
Suppose that $D^2\phi$ is uniformly negative definite there.
Fix $\theta\in C_c^\infty(U)$ and a real-valued smooth bounded
extension $\widetilde\phi$ agreeing with $\phi$ near
$\operatorname{supp}\theta$.
Then, for every $1\leq p\leq2$,
\begin{equation*}\label{eq:dispersion}
 \left\|
 e^{iv\widetilde\phi(D_y)}\theta(D_y)
 \right\|_{L^p(\R^{n-1})\to L^{p'}(\R^{n-1})}
 \leq C_{\phi,\theta,p}
 (1+|v|)^{-(n-1)(1/p-1/2)}.
\end{equation*}
Here $D_y=-i\nabla_y$, and $p'$ denotes the conjugate exponent
of $p$.
\end{lemma}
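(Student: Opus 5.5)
This is the classical dispersive estimate for a Fourier multiplier with nondegenerate phase, and I plan to prove it by interpolating between two endpoint bounds. The operator $T_v=e^{iv\widetilde\phi(D_y)}\theta(D_y)$ is a Fourier multiplier on $\R^{n-1}$ with unimodular-times-$\theta$ symbol.

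\emph{The $L^2\to L^2$ endpoint.} By Plancherel, $\|T_v\|_{L^2\to L^2}\le\|\theta\|_\infty$, uniformly in $v$. This is the case $p=2$.

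\emph{The $L^1\to L^\infty$ endpoint.} The operator $T_v$ is convolution with the kernel
\[
K_v(y)=(2\pi)^{-(n-1)}\int_{\R^{n-1}}e^{i(y\cdot\eta+v\phi(\eta))}\theta(\eta)\,d\eta .
\]
Here $\widetilde\phi=\phi$ on $\supp\theta$, so the extension plays no role in the kernel. I aim to show $\sup_y|K_v(y)|\lesssim(1+|v|)^{-(n-1)/2}$, which gives $\|T_v\|_{L^1\to L^\infty}\lesssim(1+|v|)^{-(n-1)/2}$.

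\begin{itemize}
\item For $|v|\le1$, the trivial bound $\|\theta\|_{L^1}$ suffices.
\item For $|v|\ge1$, write the phase as $v\Psi(\eta)$ with $\Psi(\eta)=\phi(\eta)+y\cdot\eta/v$. The Hessian of $\Psi$ equals $D^2\phi$, which is uniformly negative definite on $\supp\theta$. The difficulty is uniformity in the parameter $y/v$, and I handle it by a case split.
\item If $|y/v|$ is large, say exceeding $2\sup_{\supp\theta}|\nabla\phi|$, then $|\nabla\Psi|\gtrsim1+|y/v|$. Repeated integration by parts then gives decay $O(|v|^{-N})$.
\item Otherwise $y/v$ lies in a compact set. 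I partition $\supp\theta$ with a fine smooth partition of unity, whose pieces have diameter small relative to the modulus of continuity of $D^2\phi$.
\item On each piece, either $\nabla\Psi$ is bounded below uniformly, and nonstationary phase applies, or there is a single nondegenerate critical point. In the latter case the stationary phase lemma gives $|v|^{-(n-1)/2}$. Its constants depend only on finitely many derivatives of $\phi$ and $\theta$ and on the lower bound for $|\det D^2\phi|$, hence are uniform in $y$.
\end{itemize}
This uniform stationary phase step is the main technical point. Here the uniform negative definiteness is used: by injectivity of $\nabla\phi$ locally, it ensures the critical point is unique on each small piece.

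\emph{Interpolation.} Apply Riesz--Thorin between the two endpoints with $1/p=1-\vartheta/2$, i.e.\ $\vartheta=2-2/p$, so that $1-\vartheta=2/p-1$. This yields the $L^p\to L^{p'}$ bound
\[
(1+|v|)^{-\frac{n-1}2(2/p-1)}=(1+|v|)^{-(n-1)(1/p-1/2)} .
\]
The constant depends only on $\phi,\theta,p$, as claimed.
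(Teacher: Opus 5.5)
Your proposal is correct and follows the paper's route. The paper takes the $L^1\to L^\infty$ kernel bound from Littman's decay estimate for Fourier transforms of smooth densities on hypersurfaces of nonvanishing Gaussian curvature ($K_v$ is such a transform on the graph of $\phi$) and interpolates it with the Plancherel $L^2$ bound, exactly as you do. The only difference is that you re-derive Littman's bound by uniform stationary phase, so one step of your sketch needs tightening.

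As written, the dichotomy on each piece is not exhaustive with uniform constants. A critical point lying just outside a piece makes $|\nabla\Psi|$ small there, yet neither alternative holds. Work instead on a slightly enlarged convex neighborhood $V$ of the piece. If $\nabla\Psi$ vanishes at some $\eta_0\in V$, strict concavity gives $|\nabla\Psi(\eta)|\ge c\,|\eta-\eta_0|$ on $V$, which is what the uniform stationary phase lemma needs. Otherwise, $|\nabla\Psi|$ is bounded below on the piece by $c$ times the piece's distance to $\partial V$.
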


Lemma \ref{known:dispersion} follows from the classical curved-surface Fourier decay
estimate of Littman~\cite{littman}, recalled in
Cuenin--Schippa~\cite{cueninschippa},
by interpolation with the $L^2$ multiplier bound.

\begin{lemma}[Stein--Tomas restriction theorem,  \cite{Tomas,stein}]
\label{known:restriction}
Let $n\geq2$ and $1\leq p\leq p_c$. For every
$u\in\Sch(\R^n)$,
\begin{equation*}\label{eq:restriction}
 \left\|\mathcal{R} u\right\|_{L^2(\mathbb{S}^{n-1})}
 \leq C_n\|u\|_{L^p(\R^n)}.
\end{equation*}
Consequently, Fourier restriction to $\mathbb{S}^{n-1}$ extends uniquely
to a bounded operator
\[
 \Tr:L^p(\R^n)\longrightarrow L^2(\mathbb{S}^{n-1}).
\]
\end{lemma}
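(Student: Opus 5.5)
The statement is the classical Stein--Tomas theorem. I would prove it by the $TT^*$ method, with Stein's analytic interpolation to reach the endpoint $p=\pc$. The range $1\le p<\pc$ then follows by interpolating with the trivial bound $\norm{\Tr u}_{L^\infty(\Sn)}\le(2\pi)^{-n/2}\norm{u}_{L^1}$. Alternatively, a dyadic argument gives it directly, as in the last paragraph.

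\emph{Reduction.} Write $\sigma$ for surface measure on $\Sn$. By Fubini,
\[
\norm{\Tr u}_{L^2(\Sn)}^2=\int_{\Sn}\widehat u\,\overline{\widehat u}\dd\sigma=c_n\ip{u*\widehat{\sigma}}{u}.
\]
Hence, by H\"older's inequality, it suffices to show
\[
\norm{u*\widehat\sigma}_{L^{\pc'}}\lesssim\norm{u}_{L^{\pc}},\qquad \pc'=\frac{2(n+1)}{n-1}.
\]

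\emph{Endpoint via an analytic family.} For $z\in\C$ set
\[
m_z(\xi)=\frac{(1-|\xi|^2)_+^{z-1}}{\Gamma(z)},
\]
continued analytically in $z$. Then $m_z\to c\,\sigma$ as $z\to0$ in the distributional sense. Let $T_zu=u*\check m_z$. I would establish two bounds on vertical lines.
\begin{enumerate}
\item On $\Rea z=1$, $m_z$ is bounded by $|1/\Gamma(z)|\lesssim e^{c|\Im z|}$, so $T_z:L^2\to L^2$ by Plancherel.
\item On $\Rea z=-\tfrac{n-1}{2}$, the kernel is a Bessel function, $\check m_z(x)=c_z|x|^{-n/2-z+1}J_{n/2+z-1}(|x|)$. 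The standard uniform Bessel asymptotics give $|\check m_z(x)|\le C e^{c|\Im z|^2}$, so $T_z:L^1\to L^\infty$.
\end{enumerate}
Stein's interpolation theorem for analytic families of admissible growth then applies at $z=0$. There $0=(1-\theta)\bigl(-\tfrac{n-1}{2}\bigr)+\theta$, so $\theta=\tfrac{n-1}{n+1}$, and $\tfrac1p=\tfrac{1-\theta}{1}+\tfrac\theta2=\tfrac{n+3}{2(n+1)}$. This gives exactly $T_0:L^{\pc}\to L^{\pc'}$.

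\emph{Main obstacle.} The delicate step is the uniform kernel bound on $\Rea z=-\tfrac{n-1}{2}$. It needs the Bessel asymptotics with constants growing at most like $e^{c|\Im z|^2}$, which are admissible for Stein interpolation. It also needs care near $x=0$, where the kernel is smooth and bounded. If this proves cumbersome, I would instead take a dyadic decomposition $\widehat\sigma=\sum_{j\ge0}K_j$ with $\supp K_j\subset\{|x|\sim2^j\}$. These pieces satisfy $\norm{K_j}_\infty\lesssim2^{-j(n-1)/2}$ by stationary phase, and $\norm{\widehat{K_j}}_\infty\lesssim2^j$. Interpolating these bounds gives summable $L^p\to L^{p'}$ bounds for $p<\pc$. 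At $p=\pc$ one recovers the endpoint by Bourgain's interpolation trick (restricted weak type), followed by real interpolation between two nearby exponents.

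\emph{Extension.} Once the inequality is known for $u\in\Sch(\R^n)$, density of $\Sch$ in $L^p$ for $p<\infty$ gives a unique bounded extension $\Tr:L^p\to L^2(\Sn)$.
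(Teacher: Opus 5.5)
Your main argument is correct, and it is the classical Tomas--Stein proof that the paper cites without reproducing. It runs through the $TT^*$ reduction to $\norm{u*\widehat\sigma}_{L^{\pc'}}\lesssim\norm{u}_{L^{\pc}}$ and Stein's analytic interpolation for $(1-|\xi|^2)_+^{z-1}/\Gamma(z)$ between $\Rea z=1$ and $\Rea z=-\frac{n-1}{2}$; your exponent bookkeeping at $z=0$ checks out. It then finishes with Riesz--Thorin against the trivial $L^1$ bound and density.

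Your fallback, however, would not reach the strong-type endpoint as described.

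\begin{itemize}
\item With only $\norm{K_j}_\infty\lesssim 2^{-j(n-1)/2}$ and $\norm{\widehat{K_j}}_\infty\lesssim 2^j$, Bourgain's trick gives restricted weak type at the single point $(1/\pc,1/\pc')$. That amounts to $\Tr:L^{\pc,1}\to L^2(\Sn)$.
\item That point lies on the Knapp line $1/p-1/q=2/(n+1)$, which bounds the region of possible $L^p\to L^q$ estimates for $u\mapsto u*\widehat\sigma$.
\item So real interpolation could upgrade it only by using restricted weak-type bounds at points of that line on both sides of it. Your two estimates do not supply those, and for $\Tr$ itself there is nothing beyond $\pc$ to interpolate with.
\end{itemize}

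If you want a real-variable proof, the paper's own slicing does it. For $h\in\Sch(\R^n)$ whose Fourier support lies in a cap as in Corollary~\ref{cor:patch}, Lemma~\ref{lem:representation} gives $\int_{\Sn}|\widehat h|^2\dd\sigma\approx\norm{(\F_tg)(0)}_H^2=(2\pi)^{-1}\iint B(t,u)\dd t\dd u$. Apply \eqref{eq:bilinear} with $p=\pc$ and $\beta_c=\frac{n-1}{n+1}$. The ordinary one-dimensional Hardy--Littlewood--Sobolev inequality at $\pc=2/(2-\beta_c)$ then bounds this by $C\norm{h}_{L^{\pc}}^2$, and a partition of unity finishes.

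This is the mechanism of Proposition~\ref{prop:local}, with plain Hardy--Littlewood--Sobolev in place of Lemma~\ref{lem:loghls}. It trades the Bessel-function bounds on $\Rea z=-\frac{n-1}{2}$ for the Littman decay behind Lemma~\ref{known:dispersion}.
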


\begin{lemma}[Scalar zero-mass logarithmic energy formula]
\label{known:scalarlog}
Let $q\in\Sch(\R;\C)$ satisfy
\[
 \int_\R q(t)\,\dd t=0.
\]
With the Fourier normalization in \eqref{eq:fourier},
\begin{equation}\label{eq:scalarlog}
 \int_\R\frac{|\widehat q(s)|^2}{|s|}\,\dd s
 =
 -\frac{1}{\pi}\operatorname{Re}
 \iint_{\R^2}
 \log|t-u|\,q(t)\overline{q(u)}\,\dd t\,\dd u.
\end{equation}
The double integral is absolutely convergent.
\end{lemma}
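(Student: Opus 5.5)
The plan is to obtain \eqref{eq:scalarlog} as the $\beta\to0$ limit of the classical Riesz-potential identity in one dimension. The logarithm then appears as the derivative in $\beta$ of $|t-u|^{-\beta}$ at $\beta=0$. The zero-mass hypothesis is what removes the constant term in that expansion.

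\emph{Step 1 (Riesz identity).} For $0<\beta<1$ we use the distributional Fourier transform of $|x|^{-\beta}$ on $\R$. With the normalization \eqref{eq:fourier},
\[
(2\pi)^{-1/2}\int_\R e^{-ixs}|x|^{-\beta}\,\dd x=(2\pi)^{-1/2}c_\beta|s|^{\beta-1},\qquad c_\beta=2\Gamma(1-\beta)\sin(\pi\beta/2).
\]
Write $k_\beta(x)=|x|^{-\beta}$. Since $\widehat{k_\beta*q}=(2\pi)^{1/2}\widehat{k_\beta}\,\widehat q$, Plancherel gives
\[
\iint_{\R^2}|t-u|^{-\beta}q(t)\overline{q(u)}\,\dd t\,\dd u
=\int_\R (k_\beta*q)\,\overline q
=c_\beta\int_\R|s|^{\beta-1}|\widehat q(s)|^2\,\dd s .
\]
Both sides converge absolutely: $|s|^{\beta-1}$ is locally integrable and $\widehat q$ is Schwartz. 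To justify the Plancherel step for this locally integrable tempered kernel, one can test against Gaussians, or approximate $k_\beta$ by $|x|^{-\beta}e^{-\delta x^2}$ and let $\delta\to0$.

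\emph{Step 2 (subtracting the constant).} Because $\int q=0$, we have $\iint q(t)\overline{q(u)}\,\dd t\,\dd u=|\int q|^2=0$. Hence
\[
\iint\frac{|t-u|^{-\beta}-1}{\beta}\,q(t)\overline{q(u)}\,\dd t\,\dd u=\frac{c_\beta}{\beta}\int_\R|s|^{\beta-1}|\widehat q(s)|^2\,\dd s .
\]
We now let $\beta\downarrow0$ on each side.

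\begin{itemize}
\item On the left, $(|x|^{-\beta}-1)/\beta\to-\log|x|$ pointwise. For $0<\beta\le\tfrac12$ the mean value theorem gives the bound $|\log|x||\,(1+|x|^{-1/2})$. Multiplied by $|q(t)||q(u)|$, this bound is integrable on $\R^2$: it is locally integrable near the diagonal, and the Schwartz decay of $q$ absorbs the logarithmic growth at infinity. The same domination shows that the log double integral converges absolutely. Dominated convergence then gives the limit $-\iint\log|t-u|\,q(t)\overline{q(u)}$.
\item On the right, $c_\beta/\beta\to\pi$, since $\Gamma(1)=1$ and $\sin(\pi\beta/2)\sim\pi\beta/2$. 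Also $\widehat q(0)=(2\pi)^{-1/2}\int q=0$, so $|\widehat q(s)|^2\le C|s|^2$ near $0$. Thus $|s|^{\beta-1}|\widehat q|^2$ is dominated by $C|s|$ on $|s|\le1$ and by $|\widehat q|^2$ on $|s|\ge1$. Dominated convergence gives the limit $\pi\int|\widehat q|^2/|s|$.
\end{itemize}

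Equating the two limits and dividing by $\pi$ gives \eqref{eq:scalarlog}. Taking the real part is harmless: the kernel is real and symmetric, so the double integral is already real.

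\emph{Main obstacle.} The only delicate point is Step 1: the exact value of the constant $c_\beta$ and the rigorous Plancherel pairing with the non-$L^1$ kernel $|x|^{-\beta}$. I would handle the pairing with the Gaussian-damped approximation. I would pin down the constant by testing against $e^{-x^2/2}$, or by the Gamma-integral computation $\int_0^\infty x^{-\beta}\cos(xs)\,\dd x=\Gamma(1-\beta)\sin(\pi\beta/2)|s|^{\beta-1}$. Only the asymptotic $c_\beta\sim\pi\beta$ is actually needed to fix the factor $1/\pi$.
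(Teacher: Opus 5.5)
Your argument is correct, and it differs from the paper because the paper gives no proof of this lemma. It imports the identity as the one-dimensional zero-mass case of Theorem~1 and Remark~3 of Frerick--M\"uller--Thomaser, and remarks only that the factor $1/\pi$ reflects the unitary normalization \eqref{eq:fourier}.

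You instead derive the identity from the Riesz identity $\iint|t-u|^{-\beta}q(t)\overline{q(u)}\,\dd t\,\dd u=c_\beta\int|s|^{\beta-1}|\widehat q(s)|^2\,\dd s$ by taking the difference quotient at $\beta=0$. The individual steps all check out:
\begin{itemize}
\item the constant $c_\beta=2\Gamma(1-\beta)\sin(\pi\beta/2)$ and the limit $c_\beta/\beta\to\pi$ are right;
\item the space-side majorant $|\log|x||(1+|x|^{-1/2})$ is valid, and it also gives the asserted absolute convergence of the logarithmic double integral;
\item the frequency-side bound $|\widehat q(s)|^2\le Cs^2$ near $0$ is valid.
\end{itemize}

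What your route buys is self-containment. It directly verifies the one point the paper takes on trust, namely that the constant is exactly $1/\pi$ under this normalization. It also makes visible that the zero-mass hypothesis is used twice:
\begin{itemize}
\item once to remove the constant term in $|x|^{-\beta}=1-\beta\log|x|+O(\beta^2)$;
\item once to make $|\widehat q|^2/|s|$ integrable at the origin.
\end{itemize}
The citation buys brevity and applies to data more general than Schwartz functions, which is more than the paper needs.

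One small simplification: the Plancherel step in Step~1 needs no Gaussian damping. The double integral equals $\int k_\beta\psi$ with $\psi(x)=\int q(x+u)\overline{q(u)}\,\dd u\in\Sch(\R)$ and $\widehat\psi=(2\pi)^{1/2}|\widehat q|^2$. The tempered-distribution identity for $\widehat{k_\beta}$, which is even, can therefore be applied directly to $\psi$.
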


Lemma \ref{known:scalarlog} is the one-dimensional zero-mass case of
Frerick--M\"uller--Thomaser
\cite[Theorem~1 and Remark~3, p.~695]{fmt}.
The coefficient $1/\pi$ corresponds to the unitary Fourier
normalization in \eqref{eq:fourier}.

\begin{lemma}[Density in the spherical zero-trace subspace, \cite{stolyarov}]
\label{known:density}
Let $n\geq2$ and $1<p\leq p_c$. Then
\begin{equation*}\label{eq:density}
 \overline{
 \{u\in\Sch(\R^n):\Tr u=0\}
 }^{\,L^p(\R^n)}
 =
 \{f\in L^p(\R^n):\Tr f=0\}.
\end{equation*}
In particular, for every $f\in L^p(\R^n)$ satisfying
$\Tr f=0$, there exists a sequence $u_j\in\Sch(\R^n)$
such that $\Tr u_j=0$ for every $j$ and
\[
 \lim_{j\to \infty}\|u_j-f\|_{L^p(\R^n)}=0.
\]
\end{lemma}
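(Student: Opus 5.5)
The inclusion ``$\subseteq$'' is immediate: by Lemma~\ref{known:restriction}, $\Tr$ is bounded $L^p\to L^2(\Sn)$ for $1\le p\le\pc$, so its kernel is closed in $L^p$ and contains every Schwartz $u$ with $\Tr u=0$. For the reverse inclusion I plan a duality (Hahn--Banach) argument rather than a direct construction. Let $V=\{u\in\Sch:\Tr u=0\}$. Since $1<p<\infty$, the $L^p$-closure of $V$ is the pre-annihilator of $V^\perp\subset L^{p'}$. It therefore suffices to prove the following: if $\varphi\in L^{p'}$ and $\ip{u}{\varphi}=0$ for all $u\in V$, then $\ip{f}{\varphi}=0$ for every $f\in L^p$ with $\Tr f=0$.

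I would first rule out a direct correction argument, since it looks unlikely to work. The natural attempt approximates $f$ by Schwartz $g_j$ and subtracts a Schwartz $w_j$ with $\widehat w_j|_{\Sn}=\Tr g_j$ and $\norm{w_j}_p\to0$. Scaling heuristics for thin radial shells $\widehat w=\psi(N(|\xi|-1))G(\xi/|\xi|)$ give $\norm{w}_p\sim N^{n(1/p-1/2)-1/2}\norm{G}_2$, which grows at $p=\pc$. So the duality route is forced.

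\textbf{Step 1 (support).} Testing against $u\in V$ that include all Schwartz functions with $\widehat u$ vanishing near $\Sn$ shows that $\widehat\varphi$ is a tempered distribution supported on $\Sn$. Hence, locally, it is a finite sum $\sum_{k\le K}\partial_\nu^k(g_k\,\sigma)$ with $g_k$ distributions on the sphere.

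\textbf{Step 2 (only the single layer survives).} I would use $\varphi\in L^{p'}$ with $p'\ge 2(n+1)/(n-1)<\infty$ to kill the normal-derivative layers $k\ge1$. Smoothing $g_k$ in the angular variables, the $k$-th layer behaves like $|x|^{k}\cdot\widehat{g_k\sigma}$. A Littlewood--Paley decomposition in $|x|$, together with the Agmon--Hörmander asymptotics $R^{-1}\int_{B_R}|\widehat{g\sigma}|^2\to c\norm{g}_2^2$, then shows that such pieces cannot lie in $L^{p'}$ unless they vanish. The leading-order-in-$k$ term is handled first, and the remaining terms by induction downward.

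\textbf{Step 3 (regularity of the density).} Having reduced to $\widehat\varphi=g\,\sigma$, I must show $g\in L^2(\Sn)$; once this holds, $\varphi=\Tr^*g$ and $\ip{f}{\varphi}=\ip{\Tr f}{g}=0$ for all $f$ in the kernel, which finishes the proof. This is the step I expect to be the main obstacle. A crude Hölder bound of $\int_{B_R}|\varphi|^2$ by $\norm{\varphi}_{p'}$ loses a power $R^{(n-1)/(n+1)}$ against the Agmon--Hörmander normalisation $R$. I would try to exploit the fact that the $L^{p'}$ tails $\int_{|x|>R}|\varphi|^{p'}$ tend to $0$, combined with a Knapp-type localization of $g$ to caps: testing $\varphi$ against wave packets adapted to caps of size $R^{-1/2}$ converts $L^{p'}$ smallness on dyadic shells into an $\ell^2$ bound on the cap coefficients of $g$. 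If this does not close at the endpoint $p=\pc$, I would fall back on Stolyarov's argument, which is what the citation indicates is needed.
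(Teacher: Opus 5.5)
The paper gives no proof of this lemma; it quotes it from Stolyarov. So the question is whether your argument stands on its own, and it does not yet. The step you flag as the main obstacle aims at a statement that is false.

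Step~3 asks you to show $g\in L^2(\Sn)$ for every $\varphi\in V^\perp$. But $V^\perp\supset(\ker\Tr)^\perp\supset\overline{\Tr^*(L^2(\Sn))}^{\,L^{p'}}$, and $\Tr^*(L^2(\Sn))$ is not closed in $L^{p'}$. The range of $\Tr:L^p\to L^2(\Sn)$ is dense, since it contains $C^\infty(\Sn)$. For $p<\pc$ it is not all of $L^2(\Sn)$: interpolating Stein--Tomas with $\norm{\widehat f}_\infty\lesssim\norm f_1$ gives $\Tr(L^p)\subset L^q(\Sn)\subsetneq L^2(\Sn)$ for some $q>2$. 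At $p=\pc$, spherical harmonics show that the injective operator $\Tr^*$ is not bounded below. For $n=2$, $g=e^{ik\theta}$ gives $\norm{\Tr^*g}_{L^6}$ equal to a fixed multiple of $\norm{J_k}_{L^6(r\,dr)}$, which tends to $0$. In either case the closed range theorem (or the open mapping theorem) shows that $\Tr^*(L^2(\Sn))$ is not closed. Now take any $\varphi$ in its closure but outside it. Then $\varphi\in V^\perp$, and $\widehat\varphi=g\sigma$ with $g\notin L^2(\Sn)$, since otherwise $\varphi$ would itself be an extension of an $L^2$ density. So no Knapp or wave-packet estimate can close Step~3 at any exponent in the range, and falling back on the citation leaves you without a proof.

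The missing idea is that you only need $\varphi\in\overline{\Tr^*(L^2(\Sn))}^{\,L^{p'}}=(\ker\Tr)^\perp$, not $\varphi\in\Tr^*(L^2(\Sn))$. This follows by averaging over rotations:
\begin{enumerate}
\item Let $\rho_\varepsilon$ be an approximate identity on $\mathrm{SO}(n)$ and put $\varphi_\varepsilon=\int\rho_\varepsilon(R)\,\varphi(R^{-1}\cdot)\,dR$.
\item Since $p'<\infty$, $\varphi_\varepsilon\to\varphi$ in $L^{p'}$.
\item Since $\sigma$ is rotation invariant, $\widehat{\varphi_\varepsilon}=g_\varepsilon\sigma$, where $g_\varepsilon$ is the rotation average of $g$ and therefore lies in $C^\infty(\Sn)$.
\item For $f\in\ker\Tr$ one has $\int f\varphi_\varepsilon=\int_{\Sn}(\Tr f)(-\omega)\,g_\varepsilon(\omega)\,d\sigma(\omega)=0$. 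This holds first for Schwartz $f$, then for all $f\in L^p$ by Stein--Tomas.
\item Letting $\varepsilon\to0$ gives $\int f\varphi=0$, which completes your Hahn--Banach argument for all $1<p\le\pc$.
\end{enumerate}

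Step~2 also uses the wrong mechanism: decay of $\varphi$ is not what removes the normal-derivative layers. Your $V$ contains every $u$ with $\widehat u=(1-\abs{\xi}^2)\chi$, $\chi\in\Sch$. Hence $(1-\abs{\xi}^2)\widehat\varphi=0$; equivalently, $\widehat\varphi(\psi)$ depends only on $\psi|_{\Sn}$. This yields $\widehat\varphi=g\sigma$ immediately. The decay claim itself fails in part of the stated range. For $n\geq4$, take $\widehat\varphi=\partial_\nu\sigma$, that is, $\psi\mapsto-\int_{\Sn}\partial_r\psi\,d\sigma$. It gives a nonzero $\varphi$ with $\abs{\varphi(x)}\lesssim(1+\abs{x})^{(3-n)/2}$, which lies in $L^{p'}$ whenever $1<p<2n/(n+3)$.
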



\section{Deductions used in the endpoint argument}\label{sec:deductions}

\subsection{The local Fourier representation}
Let $n\geq2$. Fix a bounded convex open set $U\subset\R^{n-1}$ and a phase $\phi$
as in Lemma ~\ref{known:dispersion}.
Choose $K\subset U$ compact, $\theta\in C_c^\infty(U)$ with
$\theta=1$ on a neighborhood of $K$, and a real
$\widetilde\phi\in C_c^\infty(\R^{n-1})$ agreeing with $\phi$ near
$\supp\theta$. Define
\[
 U_\phi(v)=e^{iv\widetilde\phi(D_y)},\qquad H=L^2(\R^{n-1}),
\]
where $D_y=-i\nabla_y$. For $h\in\Sch(\R^{n-1}_y\times\R_t)$ with
$\supp\widehat h\subset K\times\R$, write $h_t(y)=h(y,t)$. Let $1\leq p\leq 2$ and set 
\begin{equation}\label{gandA}
 \begin{gathered}
 g(t)=U_\phi(-t)h_t,\qquad A(t)=\norm{h_t}_{L^p(\R^{n-1})},\\
 B(t,u)=\ip{g(t)}{g(u)}_H,\qquad \beta=(n-1)(\frac{1}{p}-\frac{1}{2}).
 \end{gathered}
\end{equation}

\begin{lemma}[Representation and slice bounds]\label{lem:representation}
In the preceding setting, $g\in\Sch(\R;H)$ and
\begin{equation*}\label{eq:representation}
 (\F_y\F_tg)(\eta,s)
 =\widehat h(\eta,\widetilde\phi(\eta)+s).
\end{equation*}
Moreover,
\begin{equation}\label{eq:bilinear}
 \abs{B(t,u)}\leq C\abs{t-u}^{-\beta}A(t)A(u)
 \qquad(t\ne u),
\end{equation}
and
\begin{equation}\label{eq:bandwidth}
 \norm{g(t)}_H=\norm{h_t}_{L^2(\mathbb{R}^{n-1})}\leq C A(t),
 \qquad \norm A_{L^p(\R)}=\norm h_{L^p(\R^{n})}.
\end{equation}
If $\widehat h(\eta,\widetilde\phi(\eta))=0$ for all $\eta$, then
\begin{equation}\label{eq:meanzero}
 \int_\R g(t)\dd t=0 \quad\text{in }H.
\end{equation}
\end{lemma}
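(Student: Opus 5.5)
The plan is to verify each assertion directly from the definitions, working on the Fourier side in $y$ and doing all computations slice by slice in $t$.

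\textbf{Representation formula.} Since $\widetilde\phi\in C_c^\infty$ is real, $U_\phi(v)$ is unitary on $H$ and acts on the $y$-Fourier side by multiplication by $e^{iv\widetilde\phi(\eta)}$. Hence
\[
(\F_y g(t))(\eta)=e^{-it\widetilde\phi(\eta)}(\F_y h_t)(\eta).
\]
All $\eta$-derivatives of $e^{-it\widetilde\phi(\eta)}$ are bounded by polynomials in $t$. The support of $\F_y h_t$ lies in $K$, which is compact, and $h$ is Schwartz. Together these give $g\in\Sch(\R;H)$, with each seminorm in $t$ controlled by Schwartz seminorms of $h$. Taking $\F_t$ of $e^{-it\widetilde\phi(\eta)}(\F_y h)(\eta,t)$ shifts the $t$-frequency by $\widetilde\phi(\eta)$, which gives $\widehat h(\eta,\widetilde\phi(\eta)+s)$. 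The only thing to check is the sign convention, which is fixed by \eqref{eq:fourier}.

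\textbf{Slice bounds \eqref{eq:bandwidth}.} By unitarity, $\norm{g(t)}_H=\norm{h_t}_{L^2}$. The $y$-frequency support of $h_t$ lies in the fixed compact set $K$. Write $h_t=\check\chi * h_t$ with $\chi\in C_c^\infty$ equal to $1$ on $K$. Young's inequality then gives $\norm{h_t}_{L^2}\le\norm{\check\chi}_{L^r}\norm{h_t}_{L^p}$ with $1+\frac12=\frac1r+\frac1p$, a Bernstein-type bound. The identity $\norm A_{L^p(\R)}=\norm h_{L^p(\R^n)}$ is Fubini.

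\textbf{Bilinear bound \eqref{eq:bilinear}.} Since $\theta=1$ near $K$, we have $h_t=\theta(D_y)h_t$. Using the group property of $U_\phi$,
\[
B(t,u)=\ip{U_\phi(u-t)\theta(D_y)h_t}{h_u}.
\]
Apply Hölder's inequality with the pair $(p',p)$ and then Lemma~\ref{known:dispersion}. This gives
\[
\abs{B(t,u)}\le C(1+\abs{t-u})^{-\beta}A(t)A(u)\le C\abs{t-u}^{-\beta}A(t)A(u).
\]

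\textbf{Mean zero \eqref{eq:meanzero}.} The integral $\int g(t)\dd t$ converges in $H$ because $g$ is Schwartz. Its $y$-Fourier transform is $(2\pi)^{1/2}(\F_y\F_t g)(\eta,0)=(2\pi)^{1/2}\widehat h(\eta,\widetilde\phi(\eta))$. This uses that the Fourier transform commutes with the Bochner integral, since $\F_y$ is bounded on $H$. The hypothesis then makes it vanish, and Plancherel gives the conclusion.

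\textbf{Expected obstacle.} The main point needing care is the first step: justifying $g\in\Sch(\R;H)$, meaning smoothness in $t$ with rapid decay in the $H$-norm. I expect to handle it by working on the $y$-Fourier side, where $g$ is $e^{-it\widetilde\phi(\eta)}(\F_yh)(\eta,t)$ on the fixed compact set $K$. Derivatives in $t$ produce at most factors $\widetilde\phi(\eta)^k$, which are bounded, so decay and smoothness follow from those of $\F_y h$.
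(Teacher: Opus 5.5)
Your proposal is correct and follows the paper's proof essentially step by step. The shared steps are: the Fourier-side computation for the representation formula and for the mean-zero property, via $\int_\R g(t)\dd t=(2\pi)^{1/2}(\F_tg)(0)$ and Plancherel in $y$; the Bernstein/Young bound with exponent $1/r=3/2-1/p$; and the use of $\theta(D_y)h_t=h_t$, unitarity, H\"older and Lemma~\ref{known:dispersion} for the bilinear bound. The one cosmetic difference is that you check $g\in\Sch(\R;H)$ on the $y$-Fourier side using boundedness of $\widetilde\phi(\eta)$, while the paper writes the same Leibniz formula with the bounded operator $\widetilde\phi(D_y)$; by Plancherel these are the same argument.
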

\begin{proof}
The operators $U_\phi(v)$ are unitary on $H$.
The Schwartz bounds for $h$ imply rapid decay of
$\norm{\partial_t^j h_t}_H$ for every $j$.
Since $\widetilde\phi(D_y)$ is bounded on $H$,
\[
 \partial_t^m g(t)=U_\phi(-t)
 \sum_{j=0}^m\binom mj
       (-i\widetilde\phi(D_y))^{m-j}\partial_t^j h_t
\]
proves that $g\in\Sch(\R;H)$.
In particular, its Bochner integral and all required moments exist.

Since $\theta(D_y)h_u=h_u$, unitarity gives
\[
 B(t,u)=\ip{h_t}{U_\phi(t-u)\theta(D_y)h_u}_H.
\]
H\"older's inequality and Lemma \ref{known:dispersion}, yield
\[
 \abs{B(t,u)}\leq C(1+\abs{t-u})^{-\beta}A(t)A(u)
 \leq C\abs{t-u}^{-\beta}A(t)A(u).
\]
Then we obtain (\ref{eq:bilinear}).

For the first estimate in \eqref{eq:bandwidth}, write
$h_t=k_\theta*h_t$, where $k_\theta$ is the fixed Schwartz
convolution kernel of $\theta(D_y)$.
Young's inequality with $1/\rho=3/2-1/p$ gives
$\norm{h_t}_2\leq\norm{k_\theta}_\rho\norm{h_t}_p$.
Here $1<\rho<2$.
The last equality in \eqref{eq:bandwidth} follows from Tonelli's
theorem applied to $\abs{h(y,t)}^p$.

Finally, we prove (\ref{eq:meanzero}).  By the definition $g(t)=U_\phi(-t)h_t$, where $h_t(y)=h(y,t)$,
the Fourier transform in the $y$ variable satisfies
\[
 (\mathcal F_y g(t))(\eta)
 =
 e^{-it\widetilde\phi(\eta)}
 (\mathcal F_y h_t)(\eta).
\]
Taking the unitary Fourier transform in $t$, we obtain
\begin{align*}
 (\mathcal F_y\mathcal F_t g)(\eta,s)
 &=
 (2\pi)^{-1/2}
 \int_{\R}
 e^{-its}e^{-it\widetilde\phi(\eta)}
 (\mathcal F_y h_t)(\eta)\,\dd t \\
 &=
 (2\pi)^{-(d+1)/2}
 \int_{\R}\int_{\R^{n-1}}
 e^{-iy\cdot\eta}
 e^{-it(s+\widetilde\phi(\eta))}
 h(y,t)\,\dd y\,\dd t \\
 &=
 \widehat h\bigl(\eta,s+\widetilde\phi(\eta)\bigr).
\end{align*}
All interchanges of integrals are justified by
$h\in\Sch(\R^{n})$. Setting $s=0$ in the preceding identity gives
\[
 \mathcal F_y\bigl[(\mathcal F_tg)(0)\bigr](\eta)
 =
 \widehat h\bigl(\eta,\widetilde\phi(\eta)\bigr)
 =0.
\]
Since $H=L^2(\R^{n-1})$, Plancherel's theorem in $y$ yields
\[
 \|(\mathcal F_tg)(0)\|_H^2
 =
 \int_{\R^{n-1}}
 \left|
 \widehat h\bigl(\eta,\widetilde\phi(\eta)\bigr)
 \right|^2\,\dd\eta
 =0.
\]
Thus $(\mathcal F_tg)(0)=0$ in $H$. Finally,
$g\in\Sch(\R;H)\subset L^1(\R;H)$, so its Bochner integral
satisfies
\[
 \int_\R g(t)\,\dd t
 =
 (2\pi)^{1/2}(\mathcal F_tg)(0)
 =0
 \qquad\text{in }H.
\]

\end{proof}

\subsection{The Hilbert-valued form of the logarithmic energy formula}
The next lemma is a reduction to Lemma ~\ref{known:scalarlog}.
Its short proof checks the infinite-dimensional passage; it does not
rederive the scalar logarithmic kernel formula.

\begin{lemma}[Hilbert-valued reduction]\label{lem:hilbertlog}
Let $H$ be a separable complex Hilbert space and
$g\in\Sch(\R;H)$ with $\int_\mathbb{R} g(t)dt=0$. Set
$B(t,u)=\ip{g(t)}{g(u)}_H$. Then
\begin{equation*}\label{eq:hilbertlog}
 \int_\R\frac{\norm{\widehat g(s)}_H^2}{\abs s}\dd s
 =-\frac1\pi\Rea\iint_{\R^2}
       \log\abs{t-u}\,B(t,u)\dd t\dd u.
\end{equation*}
The double integral is absolutely convergent.
\end{lemma}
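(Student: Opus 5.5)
Expand $g$ in an orthonormal basis $(e_k)_{k\ge1}$ of $H$, apply Lemma~\ref{known:scalarlog} to each scalar coefficient, and sum over $k$. Set $q_k(t)=\ip{g(t)}{e_k}_H$. Since $g\in\Sch(\R;H)$ and $|\partial_t^j q_k(t)|\le\norm{\partial_t^j g(t)}_H$, each $q_k$ lies in $\Sch(\R;\C)$. The Bochner integral commutes with the bounded functional $\ip{\cdot}{e_k}$, so $\int_\R q_k=\ip{\int_\R g}{e_k}=0$, and likewise $\widehat{q_k}(s)=\ip{\widehat g(s)}{e_k}$. Lemma~\ref{known:scalarlog} then gives, for each $k$,
\[
\int_\R\frac{|\widehat{q_k}(s)|^2}{|s|}\dd s=-\frac1\pi\Rea\iint_{\R^2}\log|t-u|\,q_k(t)\overline{q_k(u)}\dd t\dd u .
\]

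On the left, Parseval in $H$ gives $\sum_k|\widehat{q_k}(s)|^2=\norm{\widehat g(s)}_H^2$ pointwise. Since all terms are nonnegative, Tonelli's theorem identifies the sum of the left sides with $\int_\R\norm{\widehat g(s)}_H^2|s|^{-1}\dd s$. This integral is finite for a reason of its own: $\widehat g(0)=(2\pi)^{-1/2}\int_\R g=0$ and $\widehat g$ is smooth, so $\norm{\widehat g(s)}_H\lesssim|s|$ near $0$. On the right, for each fixed $(t,u)$ we have $\sum_k q_k(t)\overline{q_k(u)}=\ip{g(t)}{g(u)}_H=B(t,u)$, again by Parseval.

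The main obstacle is exchanging the sum over $k$ with the double integral on the right. For this I will use dominated convergence in the form $\sum_k\iint$ of absolute values being finite. By Cauchy--Schwarz in $\ell^2$,
\[
\sum_k|q_k(t)||q_k(u)|\le\norm{g(t)}_H\norm{g(u)}_H .
\]
The dominating function is therefore $|\log|t-u||\,\norm{g(t)}_H\norm{g(u)}_H$. Since $\norm{g(t)}_H\lesssim(1+|t|)^{-N}$, this is integrable on $\R^2$. The singularity $|\log|t-u||$ is locally integrable across the diagonal. At infinity, $\log|t-u|\le\log(1+|t|)+\log(1+|u|)$, which rapid decay absorbs. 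Fubini--Tonelli then permits the interchange and yields the stated identity.

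Absolute convergence of $\iint\log|t-u|\,B(t,u)$ follows from the same domination, since $|B(t,u)|\le\norm{g(t)}_H\norm{g(u)}_H$. Separability of $H$ guarantees a countable basis, so all the sums over $k$ are countable. Finally, the identity for each $k$ holds before taking real parts, so summing the real parts causes no difficulty.
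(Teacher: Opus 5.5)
Your proposal is correct and follows essentially the same route as the paper: you apply Lemma~\ref{known:scalarlog} to each coordinate in an orthonormal basis, use Parseval plus monotone convergence (Tonelli) on the left, and justify the interchange on the right by domination with $|\log|t-u||\,\norm{g(t)}_H\norm{g(u)}_H$. The only differences are cosmetic: the paper passes to the limit through the partial-sum projections $P_N$ and dominated convergence rather than Fubini over $\mathbb N\times\R^2$, and it checks integrability of the dominating function via Cauchy--Schwarz near the diagonal and $\log|t-u|\le|t|+|u|$ away from it.
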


\begin{proof}
Set
\[
 b(t):=\|g(t)\|_H.
\]
Since $g\in\Sch(\R;H)$,
\[
 b\in L^1(\R)\cap L^2(\R),
 \qquad
 \int_\R |t|b(t)\,\dd t<\infty.
\]
Moreover, the Cauchy--Schwarz inequality in $H$ gives
\[
 |B(t,u)|
 =
 |\langle g(t),g(u)\rangle_H|
 \leq b(t)b(u).
\]
We estimate the logarithmic integral separately near and away
from the diagonal.

For $|t-u|<1$, the substitution $v=t-u$ and the
Cauchy--Schwarz inequality yield
\begin{align*}
 &\iint_{|t-u|<1}
 |\log|t-u||\,b(t)b(u)\,\dd t\,\dd u \\
 &\quad=
 \int_{|v|<1}|\log|v||
 \left(\int_\R b(u+v)b(u)\,\dd u\right)\dd v \\
 &\quad\leq
 \|b\|_{L^2(\R)}^2
 \int_{|v|<1}|\log|v||\,\dd v
 =
 2\|b\|_{L^2(\R)}^2.
\end{align*}
For $|t-u|\geq1$, we have
\[
 |\log|t-u||
 =\log|t-u|
 \leq |t-u|
 \leq |t|+|u|.
\]
Consequently,
\begin{align*}
 &\iint_{|t-u|\geq1}
 |\log|t-u||\,b(t)b(u)\,\dd t\,\dd u \\
 &\quad\leq
 \iint_{\R^2}
 (|t|+|u|)b(t)b(u)\,\dd t\,\dd u \\
 &\quad=
 2\|b\|_{L^1(\R)}
 \int_\R |t|b(t)\,\dd t
 <\infty.
\end{align*}
Combining these estimates with
$\iint b(t)b(u)\,\dd t\,\dd u=\|b\|_1^2$, we obtain
\begin{equation}\label{eq:logdomination}
 \iint_{\R^2}
 \bigl(1+|\log|t-u||\bigr)b(t)b(u)\,\dd t\,\dd u
 <\infty.
\end{equation}
In particular,
\begin{equation}\label{abscon}
 \iint_{\R^2}
 |\log|t-u||\,|B(t,u)|\,\dd t\,\dd u
 <\infty.
\end{equation}

Suppose first that $H$ is infinite-dimensional.
Let $\{e_k\}_{k\geq1}$ be an orthonormal basis of $H$, and define
\[
 g_k(t):=\langle g(t),e_k\rangle_H,
 \qquad
 P_Nx:=\sum_{k=1}^N\langle x,e_k\rangle_H e_k.
\]
Each $g_k$ is a scalar Schwartz function, and
\[
 \int_\R g_k(t)\,\dd t
 =
 \left\langle\int_\R g(t)\,\dd t,e_k\right\rangle_H
 =0.
\]
Applying \eqref{eq:scalarlog} to $g_k$ gives
\[
 \int_\R\frac{|\widehat{g_k}(s)|^2}{|s|}\,\dd s
 =
 -\frac1\pi\operatorname{Re}
 \iint_{\R^2}
 \log|t-u|\,g_k(t)\overline{g_k(u)}\,\dd t\,\dd u.
\]
Since the inner product is linear in its first argument,
\[
 \widehat{g_k}(s)=\langle\widehat g(s),e_k\rangle_H.
\]
Thus
\[
 \sum_{k=1}^N|\widehat{g_k}(s)|^2
 =\|P_N\widehat g(s)\|_H^2
\]
and
\[
 \sum_{k=1}^N g_k(t)\overline{g_k(u)}
 =\langle P_Ng(t),P_Ng(u)\rangle_H.
\]
Summing the scalar identities therefore yields
\[
 \begin{aligned}
 \int_\R\frac{\|P_N\widehat g(s)\|_H^2}{|s|}\,\dd s
 &=
 -\frac1\pi\operatorname{Re}
 \iint_{\R^2}\log|t-u|\langle P_Ng(t),P_Ng(u)\rangle_H\,\dd t\,\dd u.
 \end{aligned}
\]

By Parseval's identity in $H$,
\[
 \|P_N\widehat g(s)\|_H^2
 =
 \sum_{k=1}^N|\langle\widehat g(s),e_k\rangle_H|^2
 \uparrow
 \|\widehat g(s)\|_H^2.
\]
The monotone convergence theorem therefore gives
\[
 \lim_{N\to\infty}
 \int_\R\frac{\|P_N\widehat g(s)\|_H^2}{|s|}\,\dd s
 =
 \int_\R\frac{\|\widehat g(s)\|_H^2}{|s|}\,\dd s.
\]
On the other hand, $P_Ng(t)\to g(t)$ in $H$, so
\[
 \langle P_Ng(t),P_Ng(u)\rangle_H
 \longrightarrow B(t,u).
\]
Since orthogonal projections are contractions,
\[
 \begin{aligned}
  \left|
 \log|t-u|\,
 \langle P_Ng(t),P_Ng(u)\rangle_H
 \right|
 & \leq
 |\log|t-u||\,\|g(t)\|_H\|g(u)\|_H \\
& =
 |\log|t-u||\,b(t)b(u).
 \end{aligned}
\]
The last expression is integrable by
\eqref{eq:logdomination}. Dominated convergence therefore yields
\[
 \begin{aligned}
 &\lim_{N\to\infty}
 \iint_{\R^2}
 \log|t-u|\,
 \langle P_Ng(t),P_Ng(u)\rangle_H\,\dd t\,\dd u
 \iint_{\R^2}\log|t-u|\,B(t,u)\,\dd t\,\dd u.
 \end{aligned}
\]
Passing to the limit in the finite-dimensional identity proves
\[
 \int_\R\frac{\|\widehat g(s)\|_H^2}{|s|}\,\dd s
 =
 -\frac1\pi\operatorname{Re}
 \iint_{\R^2}\log|t-u|\,B(t,u)\,\dd t\,\dd u.
\]
The right-hand side is finite by (\ref{abscon}), so the integral on
the left is finite as well.
\end{proof}

\subsection{The data-dependent separable logarithmic terms}
The elementary cancellation principle is that adding $c(t)+c(u)$
to a kernel does not change its integral against $B(t,u)$ when
$\int_\mathbb{R} g(t)dt=0$ and the additional terms are integrable.
The particular choice of $c$ below connects this principle with
Lemma~\ref{lem:loghls}. We check its integrability explicitly.

\begin{lemma}[Legitimate logarithmic renormalization]\label{lem:cancellation}
Use the notation \eqref{gandA}, suppose $\int_\mathbb{R} g(t)dt=0$, and assume
$M=\norm h_{L^p(\mathbb{R}^n)}>0$. Define
\[
 F(t)=\frac{A(t)}{M},\qquad
 c(t)=\begin{cases}
       (p/2)\log F(t),&F(t)>0,\\
       0,&F(t)=0.
      \end{cases}
\]
The two functions $c(t)B(t,u)$ and $c(u)B(t,u)$ are absolutely
integrable on $\R^2$, and their integrals are zero. Consequently,
\begin{equation}\label{eq:renormalization}
 \begin{split}
 &\iint_{\R^2}\log\abs{t-u}\,B(t,u)\dd t\dd u =\iint_{\R^2}
       \log\!\left(\abs{t-u}F(t)^{p/2}F(u)^{p/2}\right)
       B(t,u)\dd t\dd u.
 \end{split}
\end{equation}
The entire integrand on the second line is defined to be zero
where $F(t)F(u)=0$, and is absolutely integrable.
Also $\norm F_{L^p(\mathbb{R})}=1$.
\end{lemma}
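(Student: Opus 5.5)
The plan is to prove absolute integrability of $c(t)B(t,u)$ first. Once that is done, Fubini together with the mean-zero condition \eqref{eq:meanzero} makes the integrals vanish. Identity \eqref{eq:renormalization} then follows by linearity, because $\log(|t-u|F(t)^{p/2}F(u)^{p/2})=\log|t-u|+c(t)+c(u)$ wherever $F(t)F(u)>0$. The normalization $\norm F_{L^p(\R)}=1$ is immediate from the last identity in \eqref{eq:bandwidth}, which gives $\norm A_{L^p}=M$.

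For integrability I do not use the dispersive bound \eqref{eq:bilinear}. It suffices to use the crude Cauchy--Schwarz bound $|B(t,u)|\le \norm{g(t)}_H\norm{g(u)}_H=b(t)b(u)$ with $b(t)=\norm{h_t}_{L^2}$. Then
\[
\iint_{\R^2}|c(t)||B(t,u)|\dd t\dd u\le \norm b_{L^1(\R)}\int_\R |c(t)|b(t)\dd t .
\]
Here $\norm b_{L^1}<\infty$ because $g\in\Sch(\R;H)$. By \eqref{eq:bandwidth} we have $b\le CA=CMF$, and $c=0$ where $F=0$, so the remaining task is to show $\int_\R F(t)|\log F(t)|\dd t<\infty$. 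I apply Lemma~\ref{logz} with a small $\varepsilon$, which gives $F|\log F|\lesssim_\varepsilon F^{1+\varepsilon}+F^{1-\varepsilon}$ on $\{F>0\}$.

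\textbf{Main obstacle.} The technical point is that $F$ has enough integrability at both ends for this bound to be integrable. Since $h\in\Sch(\R^n)$, $A(t)=\norm{h_t}_{L^p}$ is bounded and decays rapidly in $t$. For instance, $A(t)\lesssim_N(1+|t|)^{-N}$ follows from $|h(y,t)|\lesssim (1+|y|)^{-n}(1+|t|)^{-N}$. Hence $F\in L^r(\R)$ for every $r>0$, and in particular $F^{1\pm\varepsilon}\in L^1$ for $\varepsilon<1$. The small-value region is where $|\log F|$ blows up, and there it is $F^{1-\varepsilon}$ that matters; rapid decay makes it integrable.

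\textbf{Vanishing of the integrals.} By Fubini,
\[
\iint c(t)B(t,u)\dd u\dd t=\int_\R c(t)\Bigl\langle g(t),\int_\R g(u)\dd u\Bigr\rangle_H \dd t=0,
\]
because Bochner integration commutes with the bounded functional $\langle g(t),\cdot\rangle_H$ and $\int g=0$. The term $c(u)B(t,u)$ is handled symmetrically, using conjugate-linearity in the second slot.

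\textbf{Conclusion.} On the set where $F(t)F(u)=0$, Cauchy--Schwarz and $b\le CMF$ give $B(t,u)=0$, so the convention $0$ for the integrand there is consistent with the identity. Everywhere else, the renormalized integrand equals $(\log|t-u|+c(t)+c(u))B(t,u)$. This is a sum of three absolutely integrable functions: the first by \eqref{abscon} from Lemma~\ref{lem:hilbertlog}, the other two by the estimate above. Integrating and using the two vanishing integrals yields \eqref{eq:renormalization}, together with absolute integrability of the renormalized integrand.
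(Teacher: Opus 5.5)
Your proposal is correct and follows essentially the same route as the paper: you reduce absolute integrability of $c(t)B(t,u)$, via $\abs{B(t,u)}\le\norm{g(t)}_H\norm{g(u)}_H$ and \eqref{eq:bandwidth}, to the entropy bound $\int_\R F\abs{\log F}\dd t<\infty$ coming from the rapid decay of $A$, then kill the separable terms by Fubini and $\int_\R g(t)\dd t=0$, and integrate the expansion $\log\abs{t-u}+c(t)+c(u)$ termwise using \eqref{abscon}. The only cosmetic difference is that you control $F\abs{\log F}$ through Lemma~\ref{logz}, whereas the paper uses $F\abs{\log F}\le CF^{1/2}$ on $\{F\le1\}$ together with boundedness and integrability of $F$.
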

\begin{proof}
Since $h$ is Schwartz, for each $N$ there is a constant $C_N$ with
$A(t)\leq C_N(1+\abs t)^{-N}$.
For example, first bound
$\abs{h(y,t)}\leq C_{N,L}(1+\abs t)^{-N}(1+\abs y)^{-L}$
with $Lp>d$, and then integrate in $y$.
Thus $F$ is bounded and $F^\delta\in L^1(\R)$ for every $\delta>0$.
In particular,
\begin{equation}\label{eq:entropy}
 \int_\R F(t)\abs{\log F(t)}\dd t<\infty,
\end{equation}
where the integrand is zero at $F=0$.
Indeed, for $0<F\leq1$, we use $F\abs{\log F}\leq C F^{1/2}$;
for $F>1$, we use boundedness of $F$ and integrability of $F$.

By \eqref{eq:bandwidth} and \eqref{eq:entropy},
\[
 \int_\R\norm{c(t)g(t)}_H\dd t
 \leq CM\int_\R F(t)\abs{\log F(t)}\dd t<\infty.
\]
Also $g\in L^1(\R;H)$. It follows that
\[
\iint_{\R^2}\abs{c(t)B(t,u)}\dd t\dd u
 \leq\left(\int_{\R}\norm{c(t)g(t)}_H\dd t\right)
       \left(\int_{\R}\norm{g(u)}_H\dd u\right)<\infty,
\]
and likewise for $c(u)B(t,u)$.
Fubini's theorem and $\int_\mathbb{R} g(t)dt=0$ now give
\begin{align*}
 \iint_{\R^2} c(t)B(t,u)\dd t\dd u
 &=\ip{\int_{\R} c(t)g(t)\dd t}{\int_{\R} g(u)\dd u}_H=0,\\
 \iint_{\R^2} c(u)B(t,u)\dd t\dd u
 &=\ip{\int_{\R} g(t)\dd t}{\int_{\R} c(u)g(u)\dd u}_H=0.
\end{align*}
The function $c$ is real-valued, as required for the second equality
under our inner product convention.

For $t\ne u$ and $F(t)F(u)>0$,
\[
 \log\!\left(\abs{t-u}F(t)^{p/2}F(u)^{p/2}\right)
 =\log\abs{t-u}+c(t)+c(u).
\]
If $F(t)=0$, then $A(t)=0$, so $h_t=0$ and $g(t)=0$ in $H$.
The same holds with $u$ in place of $t$.
Thus $B(t,u)=0$ whenever $F(t)F(u)=0$.
The convention for the entire integrand is therefore consistent
with this expansion after multiplication by $B$.
The bare logarithmic term is absolutely integrable by
Lemma~\ref{lem:hilbertlog}, and the other two terms were just checked.
Integrating the expansion proves \eqref{eq:renormalization}.

Finally, \eqref{eq:bandwidth} and Tonelli give
$\int_{\mathbb{R}} F(t)^p\dd t=M^{-p}\int_{\mathbb{R}} A(t)^p\dd t=1$.
This normalization is not needed to derive
\eqref{eq:renormalization}; it is used later in
Lemma~\ref{lem:loghls}.
The qualitative integrability bounds in this proof may depend on
$h$. They are not the uniform constants in the endpoint estimate.
\end{proof}

\section{The local endpoint estimate}\label{sec:local}
Now we are ready to give the estimate on a
fixed graph. The integration in the normal frequency is two-sided.

\begin{proposition}[Local graph estimate]\label{prop:local}
Let the phase, cutoff, and compact set be as in the local setup
of Section~\ref{sec:deductions}. Suppose
$h\in\Sch(\R^n)$, $\supp\widehat h\subset K\times\R$, and
$\widehat h(\eta,\widetilde\phi(\eta))=0$ for every $\eta$.
Then
\begin{equation*}\label{eq:graphenergy}
 \Q_\phi(h):=\int_{\R^{n-1}}\int_\R
 \frac{\abs{\widehat h(\eta,\widetilde\phi(\eta)+s)}^2}{\abs s}
 \dd s\dd\eta
 \leq C_{\phi,\theta,n}\norm h_{L^{p_c}(\mathbb{R}^n)}^2.
\end{equation*}
\end{proposition}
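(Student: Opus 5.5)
We chain the lemmas of Section~\ref{sec:deductions} with $p=\pc$. For this choice
\[
\beta=(n-1)\Bigl(\frac{1}{\pc}-\frac12\Bigr)=(n-1)\cdot\frac{1}{n+1}=\frac{n-1}{n+1},
\qquad
\frac{2}{2-\beta}=\frac{2(n+1)}{n+3}=\pc .
\]
So $\beta\in(0,1)$ and $\pc=2/(2-\beta)$, which are exactly the hypotheses of Lemma~\ref{lem:loghls}. If $h=0$ there is nothing to prove, so assume $M=\norm h_{L^{\pc}}>0$.

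\emph{Step 1 (rewrite the energy).} Form $g(t)=U_\phi(-t)h_t$ as in \eqref{gandA}. Lemma~\ref{lem:representation} gives $g\in\Sch(\R;H)$ and $(\F_y\F_t g)(\eta,s)=\widehat h(\eta,\widetilde\phi(\eta)+s)$. Plancherel in $y$ for each fixed $s$ then yields
\[
\Q_\phi(h)=\int_\R \frac{\norm{\widehat g(s)}_H^2}{\abs s}\dd s .
\]
The vanishing hypothesis gives $\int_\R g\,\dd t=0$ by \eqref{eq:meanzero}. Lemma~\ref{lem:hilbertlog} therefore applies and gives
\[
\Q_\phi(h)=-\frac1\pi\Rea\iint_{\R^2}\log\abs{t-u}\,B(t,u)\dd t\dd u .
\]

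\emph{Step 2 (renormalize).} By Lemma~\ref{lem:cancellation}, we may replace $\log\abs{t-u}$ by $\log\bigl(\abs{t-u}F(t)^{\pc/2}F(u)^{\pc/2}\bigr)$ without changing the integral. Here $F=A/M$ and $\norm F_{L^{\pc}(\R)}=1$.

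Taking absolute values and using \eqref{eq:bilinear}, $\abs{B(t,u)}\le C\abs{t-u}^{-\beta}A(t)A(u)=CM^2\abs{t-u}^{-\beta}F(t)F(u)$, we get
\[
\Q_\phi(h)\le \frac{CM^2}{\pi}\iint_{\R^2}\abs{t-u}^{-\beta}F(t)F(u)\,\Bigl|\log\bigl(\abs{t-u}F(t)^{\pc/2}F(u)^{\pc/2}\bigr)\Bigr|\dd t\dd u .
\]
On the set where $F(t)F(u)=0$ we have $B=0$, so the conventions agree there. The diagonal $t=u$ has measure zero.

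\emph{Step 3 (conclude).} Lemma~\ref{lem:loghls} with $\beta=(n-1)/(n+1)$ bounds the last double integral by a constant depending only on $n$. Hence $\Q_\phi(h)\le C_{\phi,\theta,n}M^2$. The constant $C$ from \eqref{eq:bilinear} comes from Lemma~\ref{known:dispersion} and depends only on $\phi,\theta,n$, not on $h$.

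\emph{Main obstacle.} The key point is Step 2. The bare kernel $\abs{t-u}^{-\beta}\abs{\log\abs{t-u}}$ is not scale-invariant, so bounding it directly against $A(t)A(u)$ would lose at the endpoint. The data-dependent renormalization makes the logarithm dilation-invariant, and this is what lets the logarithmic HLS inequality close the estimate with uniform constants.

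One point needs checking. Lemma~\ref{lem:cancellation} only gives qualitative, $h$-dependent integrability. Each integral is finite for fixed Schwartz $h$, so the identities are legitimate, and the final bound uses only the uniform constants from Lemmas~\ref{known:dispersion} and \ref{lem:loghls}.
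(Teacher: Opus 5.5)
Your proposal is correct and follows the paper's proof essentially step for step. In both, the transverse Plancherel identity $\Q_\phi(h)=\int_\R \norm{\widehat g(s)}_H^2\abs s^{-1}\dd s$ feeds into the Hilbert-valued logarithmic formula, followed by the data-dependent renormalization of Lemma~\ref{lem:cancellation}, the dispersive bilinear bound \eqref{eq:bilinear}, and finally the logarithmic HLS inequality with $\beta_c=(n-1)/(n+1)$ and $\pc=2/(2-\beta_c)$.
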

\begin{proof}
Let $g,B$ be defined by \eqref{gandA} and $A(t)=\norm{h_t}_{L^{p_c}(\R^{n-1})}$. Denote $\beta_c=(n-1)(\frac{1}{p_c}-\frac{1}{2})=\frac{n-1}{n+1}$ and then $p_c=\frac{2}{2-\beta_c}$.
Lemma~\ref{lem:representation} gives $g\in\Sch(\R;H)$ and
$\int_\mathbb{R} g(t)dt=0$. Transverse Plancherel and Tonelli's theorem give
\begin{equation*}\label{eq:energyrepresentation}
 \Q_\phi(h)=\int_\R
       \frac{\norm{(\F_tg)(s)}_H^2}{\abs s}\dd s.
\end{equation*}
This is an $L^2$ identity, not a statement about $L^{p_c}$ boundedness
of a nonlinear change of frequency variables.

If $\norm h_{L^{p_c}(\mathbb{R}^n)}=0$, there is nothing to prove.
Otherwise put $M=\norm h_{L^{p_c}(\mathbb{R}^n)}$ and $F=A/M$.
First apply Lemma~\ref{lem:hilbertlog}; then use the vanishing of
the two separable terms established in
Lemma~\ref{lem:cancellation}. This gives the exact identity
\begin{equation*}\label{eq:normalizedenergy}
 \begin{split}
 \Q_\phi(h)&=-\frac1\pi\Rea\iint_{\R^2}B(t,u)
       \log\!\left(\abs{t-u}F(t)^{p_c/2}F(u)^{p_c/2}\right)
       \dd t\dd u.
 \end{split}
\end{equation*}
The double integral is already known to be absolutely convergent.
Where $F(t)F(u)=0$, its entire integrand is assigned the value zero,
as in Lemma~\ref{lem:cancellation}; values on the diagonal can also
be assigned arbitrarily.

Using \eqref{eq:bilinear}, $A(t)A(u)=M^2F(t)F(u)$, and the
nonnegativity of $\Q_\phi(h)$, we obtain
\begin{align*}
 \Q_\phi(h)
 &\leq\frac1\pi\iint_{\mathbb{R}^2}
 \left|\log\!\left(\abs{t-u}F(t)^{p_c/2}F(u)^{p_c/2}\right)\right|
 \abs{B(t,u)}\dd t\dd u\\
 &\leq CM^2\iint_{\mathbb{R}^2}\abs{t-u}^{-\beta_c}F(t)F(u)
 \left|\log\!\left(\abs{t-u}F(t)^{p_c/2}F(u)^{p_c/2}\right)\right|
 \dd t\dd u\\
 &\leq CM^2.
\end{align*}
For the last inequality we use $\norm F_{L^{p_c}(\mathbb{R})}=1$ and
Lemma~\ref{lem:loghls}, with $p_c=2/(2-\beta_c)$.
This is the step where normalization of the slice norms is needed.
All constants in this final estimate depend only on the fixed
phase, cutoff, and dimension.
\end{proof}

\subsection{Passing from a graph weight to the spherical weight}
Consider a fixed small neighborhood of the north pole and write
$\xi=(\eta,\lambda)\in\mathbb{R}^{n-1}\times\mathbb{R}$, with
\[
 K\subset\{\eta\in\mathbb{R}^{n-1}:  \abs\eta\leq r\},\quad r<1,\qquad
 \phi(\eta)=\sqrt{1-\abs\eta^2}.
\]
On a slightly larger ball still contained in $\{\eta\in\mathbb{R}^{n-1}: \abs\eta<1\}$,
\[
 D^2\phi(\eta)=-\frac{I}{\phi(\eta)}
                -\frac{\eta\eta^{\mathsf T}}{\phi(\eta)^3}.
\]
Thus the phase satisfies the fixed nondegenerate curvature hypotheses.
The cutoffs in the local setup are chosen inside that ball.

\begin{corollary}[Spherical patch estimate]\label{cor:patch}
Suppose $h\in\Sch(\R^n)$ satisfies $\Tr h=0$ and
\[
 \supp\widehat h\subset K\times(1/2,3/2).
\]
Then $\Q(h)\leq C\norm h_{L^{p_c}(\mathbb{R}^n)}^2$, with a constant for the fixed patch.
\end{corollary}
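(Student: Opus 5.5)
The plan is to reduce Corollary~\ref{cor:patch} directly to Proposition~\ref{prop:local}, using the phase $\phi(\eta)=\sqrt{1-\abs\eta^2}$ set up just above, and then to compare the spherical weight with the graph weight on the support of $\widehat h$. Here I take $\Q(h)$ to denote the spherical energy $\int_{\R^n}\abs{1-\abs\xi^2}^{-1}\abs{\widehat h(\xi)}^2\dd\xi$.

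\textbf{Verifying the hypotheses of Proposition~\ref{prop:local}.}
\begin{enumerate}
\item Since $h\in\Sch(\R^n)$ and $\supp\widehat h\subset K\times(1/2,3/2)\subset K\times\R$, the support hypothesis holds.
\item For the trace hypothesis, note that for Schwartz $h$ the operator $\Tr h$ is the literal restriction of the continuous function $\widehat h$ to $\Sn$. Hence $\Tr h=0$ means that $\widehat h$ vanishes pointwise on the sphere.
\item For $\eta\in K$, the point $(\eta,\phi(\eta))$ lies on the upper hemisphere of $\Sn$. Since $\widetilde\phi=\phi$ near $\supp\theta\supset K$, we get $\widehat h(\eta,\widetilde\phi(\eta))=0$ there.
\item For $\eta\notin K$, the value $\widehat h(\eta,\cdot)$ vanishes identically by the support assumption.
\end{enumerate}
Thus $\widehat h(\eta,\widetilde\phi(\eta))=0$ for every $\eta$. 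The curvature hypothesis on $\phi$ was checked via the displayed Hessian, so Proposition~\ref{prop:local} gives $\Q_\phi(h)\le C\norm h_{L^{\pc}}^2$.

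\textbf{Comparing the weights.} Write $\xi=(\eta,\lambda)$ with $\eta\in K$ and $\lambda\in(1/2,3/2)$. Then
\[
1-\abs\xi^2=\phi(\eta)^2-\lambda^2=(\phi(\eta)-\lambda)(\phi(\eta)+\lambda).
\]
Since $\phi(\eta)\ge\sqrt{1-r^2}>0$ and $\lambda>1/2$, we have $\phi(\eta)+\lambda\ge 1/2$. Hence, on $\supp\widehat h$,
\[
\abs{1-\abs\xi^2}^{-1}\le 2\abs{\lambda-\widetilde\phi(\eta)}^{-1}.
\]
Integrating over $\supp\widehat h$ and substituting $\lambda=\widetilde\phi(\eta)+s$ for each fixed $\eta$ (a translation in the last variable, with unit Jacobian) gives
\[
\Q(h)\le 2\int_{\R^{n-1}}\int_\R\frac{\abs{\widehat h(\eta,\widetilde\phi(\eta)+s)}^2}{\abs s}\dd s\dd\eta=2\Q_\phi(h),
\]
by Tonelli's theorem for the nonnegative integrand. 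Combining with the previous step yields $\Q(h)\le C\norm h_{L^{\pc}}^2$, with a constant depending only on the fixed patch.

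\textbf{Expected difficulty.} There is essentially no obstacle. The only points needing care are:
\begin{itemize}
\item the identification of the continuous-extension trace $\Tr h$ with the pointwise restriction for Schwartz $h$;
\item the fact that the patch lies in the upper hemisphere, so the graph $\lambda=\phi(\eta)$ captures the part of the sphere meeting $\supp\widehat h$;
\item the lower bound $\phi+\lambda\ge1/2$, which keeps the comparison constant uniform.
\end{itemize}
The lower hemisphere point $(\eta,-\phi(\eta))$ is irrelevant, since $\lambda>1/2$ on $\supp\widehat h$.
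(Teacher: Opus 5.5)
Your proposal is correct and follows the paper's proof essentially step by step. You factor $1-\abs\xi^2=(\phi(\eta)-\lambda)(\phi(\eta)+\lambda)$ and bound the second factor below on the Fourier support (your $1/2$ in place of the paper's slightly sharper $c_*=\tfrac12+\sqrt{1-r^2}$). You then translate $s=\lambda-\phi(\eta)$ to obtain $\Q_\phi(h)$, and check the graph-vanishing hypothesis of Proposition~\ref{prop:local} from pointwise vanishing of the continuous trace on $K$ and from the support condition off $K$.
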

\begin{proof}
On the Fourier support of $h$,
\begin{equation*}\label{eq:spherefactor}
 \abs{1-\abs\xi^2}
 =\abs{\lambda^2-\phi(\eta)^2}
 =\abs{\lambda-\phi(\eta)}\abs{\lambda+\phi(\eta)}.
\end{equation*}
The second factor is bounded below by
\[
 \lambda+\phi(\eta)\geq\frac12+\sqrt{1-r^2}=:c_*>0.
\]
Consequently,
\begin{equation*}\label{eq:weightcomparison}
 \Q(h)\leq c_*^{-1}
       \int_K\int_\R
       \frac{\abs{\widehat h(\eta,\lambda)}^2}
            {\abs{\lambda-\phi(\eta)}}\dd\lambda\dd\eta.
\end{equation*}
The integral can be taken over the whole $\lambda$ axis because
$\widehat h$ is zero outside the prescribed support.

For each fixed $\eta$, the translation
$s=\lambda-\phi(\eta)$ has $\dd\lambda=\dd s$. Hence
\begin{align*}
 \int_K\int_\R
 \frac{\abs{\widehat h(\eta,\lambda)}^2}
      {\abs{\lambda-\phi(\eta)}}\dd\lambda\dd\eta
 &=\int_K\int_\R
 \frac{\abs{\widehat h(\eta,\phi(\eta)+s)}^2}{\abs s}\dd s\dd\eta\\
 &=\Q_\phi(h).
\end{align*}
The last equality uses $\widetilde\phi=\phi$ on $K$ and the
vanishing of $\widehat h$ for $\eta\notin K$.
These substitutions are valid for nonnegative extended integrals;
finiteness need not be assumed in advance.

To apply Proposition~\ref{prop:local}, it remains to check its
vanishing assumption. For $\eta\in K$, the point
$(\eta,\phi(\eta))$ lies on $\mathbb{S}^{n-1}$, so
$\widehat h(\eta,\widetilde\phi(\eta))=0$.
Indeed, the spherical trace of a Schwartz function is smooth,
and vanishing in $L^2(\mathbb{S}^{n-1})$ implies pointwise vanishing.
For $\eta\notin K$, the same graph vanishing follows from the
Fourier support condition. Combining the weight comparison,
translation, and local graph estimate gives
\begin{equation*}\label{eq:spherelocal}
 \Q(h)\leq c_*^{-1}\Q_\phi(h)\leq C\norm h_{L^{p_c}(\mathbb{R}^n)}^2.
\end{equation*}
Only the energy integral has been rewritten. The $L^{p_c}$ norm being
estimated is the original norm of $h$.
\end{proof}

\section{Globalization and general zero-trace data}\label{sec:global}
\subsection{Fixed localization and the contribution away from the sphere}
Choose finitely many ambient neighborhoods of $\mathbb{S}^{n-1}$, each of
which becomes a patch of the preceding type after a fixed rotation.
Choose a smooth partition
\begin{equation}\label{eq:partition}
 \chi_0+\sum_{j=1}^J\chi_j=1,
\end{equation}
where $\chi_j\in C_c^\infty(\mathbb{R}^n)$ is supported in the $j$th neighborhood
for $j\geq1$, and $\chi_0$ vanishes on a tubular neighborhood of
the sphere and equals one for sufficiently large $\abs\xi$.
All these choices are made once, depending only on $n$.

For completeness, we construct such a partition by selecting nonnegative bump functions $\rho_j$ in these neighborhoods, satisfying
$\rho=\sum\limits_{j=0}^J\rho_j>0$ near $\mathbb{S}^{n-1}$. We then choose \(\kappa\in C_c^\infty(\mathbb R^n)\) whose support is compactly contained in \(\{x\in\mathbb{R}^n: \rho(x)>0\}\) and which equals 1 on a smaller tubular neighborhood of the sphere.
 Set
$\chi_j=\kappa\rho_j/\rho$,
and $\chi_0=1-\kappa$.
For $j\geq1$, $\chi_j(D)$ is convolution with the fixed kernel
\[
 k_j(x)=(2\pi)^{-n/2}\int_{\R^n}e^{ix\cdot\xi}\chi_j(\xi)\dd\xi
 \in\Sch(\R^n),
\]
so Young's inequality gives
\begin{equation}\label{eq:multiplier}
 \norm{\chi_j(D)f}_{L^p(\mathbb{R}^n)}\leq\norm{k_j}_{L^1(\mathbb{R}^n)}\norm f_{L^p(\mathbb{R}^n)},\quad \forall p\geq1.
\end{equation}

\begin{lemma}[Elementary estimate away from the sphere]\label{lem:away}
For every $f\in L^{p_c}(\R^n)$,
\begin{equation*}\label{eq:away}
 \int_{\R^n}
 \frac{\abs{\chi_0(\xi)\widehat f(\xi)}^2}
      {\abs{1-\abs\xi^2}}\dd\xi
 \leq C_n\norm f_{L^{p_c}(\mathbb{R}^n)}^2.
\end{equation*}
\end{lemma}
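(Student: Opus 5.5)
Since $\chi_0$ vanishes on a tubular neighborhood $\{\,||\xi|-1|<\delta\,\}$ of the sphere, the weight is harmless on its support. There we have $|1-|\xi|^2|\geq c_\delta>0$, and for large $|\xi|$ also $|1-|\xi|^2|\gtrsim 1+|\xi|^2$. We therefore plan to bound
\[
\int_{\R^n}\frac{|\chi_0(\xi)\widehat f(\xi)|^2}{|1-|\xi|^2|}\,\dd\xi
\leq C_n\int_{\R^n}\frac{|\widehat f(\xi)|^2}{(1+|\xi|^2)}\,\dd\xi,
\]
using $0\le\chi_0\le1$. The right side is $C_n\|(1-\Delta)^{-1/2}f\|_{L^2}^2$ by Plancherel. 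This is an inhomogeneous Sobolev (Bessel potential) bound, and no cancellation is needed.

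It then suffices to show that $(1-\Delta)^{-1/2}$ maps $L^{p_c}$ to $L^2$. Sobolev embedding $W^{1,p_c}\hookrightarrow L^2$ requires $\frac1{p_c}-\frac12\le\frac1n$. Here $\frac1{p_c}-\frac12=\frac{n+3}{2(n+1)}-\frac12=\frac1{n+1}<\frac1n$, so we are strictly inside the range. Concretely, the Bessel kernel $G_1$ of $(1-\Delta)^{-1/2}$ satisfies $G_1(x)\lesssim |x|^{1-n}$ near $0$ (with a logarithm when $n=1$, which is excluded) and decays exponentially at infinity. Young's inequality with $1+\frac12=\frac1{p_c}+\frac1r$ gives $\frac1r=\frac32-\frac{n+3}{2(n+1)}=\frac{n}{n+1}$, so $r=\frac{n+1}{n}<\frac{n}{n-1}$. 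Hence $G_1\in L^r$, and $\|G_1*f\|_2\le\|G_1\|_r\|f\|_{p_c}$.

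A cleaner alternative that avoids the kernel is Hausdorff--Young plus Hölder. Since $p_c<2$, $\widehat f\in L^{p_c'}$ with $p_c'=\frac{2(n+1)}{n-1}$. Then
\[
\int\frac{|\widehat f|^2}{1+|\xi|^2}\le \|\widehat f\|_{p_c'}^2\,\|(1+|\xi|^2)^{-1}\|_{L^{s}},
\]
where $\frac{2}{p_c'}+\frac1s=1$, i.e. $\frac1s=1-\frac{n-1}{n+1}=\frac{2}{n+1}$. This gives $s=\frac{n+1}{2}$, and $(1+|\xi|^2)^{-s}=(1+|\xi|^2)^{-(n+1)/2}$ is integrable on $\R^n$ since $n+1>n$. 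I would present this version: it is two lines and uses only Hausdorff--Young, which also matches the remark that $\widehat f$ is the Hausdorff--Young representative.

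The only point requiring care, which I expect to be the mild obstacle, is the pointwise comparison $\chi_0(\xi)^2|1-|\xi|^2|^{-1}\le C(1+|\xi|^2)^{-1}$. It holds on compact parts of $\supp\chi_0$ because the weight is bounded there, and for $|\xi|\ge2$ because $|\xi|^2-1\ge\frac14(1+|\xi|^2)$. The constant depends only on the fixed $\delta$, hence only on $n$. Finally, everything is an inequality between nonnegative integrals valid for every $f\in L^{p_c}$, so no density argument is needed.
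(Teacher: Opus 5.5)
Your proof is correct and is essentially the paper's argument. The paper also observes that the weight $|\chi_0|^2/|1-|\xi|^2|$ is bounded on bounded sets and $O(|\xi|^{-2})$ at infinity, so it lies in $L^{(n+1)/2}$, and then concludes by H\"older and Hausdorff--Young with $2q'=p_c'$, exactly as in your preferred version; your domination by $(1+|\xi|^2)^{-1}$ is the same observation, and the only nitpick is that the construction $\chi_0=1-\kappa$ does not guarantee $0\le\chi_0\le1$, though your comparison only needs $\chi_0$ to be bounded.
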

\begin{proof}
Let $W(\xi)=\abs{\chi_0(\xi)}^2/\abs{1-\abs\xi^2}$,
assigning the value zero on the sphere.
The function $W$ is bounded on bounded sets and is
$O(\abs\xi^{-2})$ at infinity. For $q=(n+1)/2>1$,
\[
 \int_{\abs\xi\geq2}\abs\xi^{-2q}\dd\xi
 =\abs{\mathbb{S}^{n-1}}\int_2^\infty r^{n-1-2q}\dd r
 =\abs{\mathbb{S}^{n-1}}\int_2^\infty r^{-2}\dd r<\infty.
\]
Thus $W\in L^q(\R^n)$.
Since $q'=(n+1)/(n-1)$ and $2q'=p_c'$, H\"older and
Hausdorff-Young inequality give
\[
 \int_{\mathbb{R}^n} W(\xi)\abs{\widehat f(\xi)}^2d\xi
 \leq\norm W_{L^q(\mathbb{R}^n)}\norm{\widehat f}_{L^{2q'}(\mathbb{R}^n)}^2
 =\norm W_{L^q(\mathbb{R}^n)}\norm{\widehat f}_{L^{p_c'}(\mathbb{R}^n)}^2
 \leq C_n\norm f_{L^{p_c}(\mathbb{R}^n)}^2.
\]
\end{proof}

\subsection{The estimate on the Schwartz zero-trace class}
\begin{proposition}\label{prop:schwartz}
For $f\in\Sch(\R^n)$ with $\Tr f=0$,
\begin{equation*}\label{eq:schwartz}
 \Q(f)\leq C_n\norm f_{L^{p_c}(\mathbb{R}^n)}^2.
\end{equation*}
\end{proposition}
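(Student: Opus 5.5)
We plan to decompose $f$ with the fixed partition \eqref{eq:partition}, writing $f=\chi_0(D)f+\sum_{j=1}^J f_j$ with $f_j=\chi_j(D)f$. Here $\Q(f)=\int_{\R^n}\abs{1-\abs\xi^2}^{-1}\abs{\widehat f(\xi)}^2\dd\xi$. Since $\widehat f=\chi_0\widehat f+\sum_j\chi_j\widehat f$ pointwise, the elementary inequality $\abs{\sum_{j=0}^J a_j}^2\leq (J+1)\sum_{j=0}^J\abs{a_j}^2$ gives
\[
 \Q(f)\leq (J+1)\Big(\int_{\R^n}\frac{\abs{\chi_0\widehat f}^2}{\abs{1-\abs\xi^2}}\dd\xi+\sum_{j=1}^J\Q(f_j)\Big).
\]
Since $J$ depends only on $n$, it suffices to bound each term by $C_n\norm f_{L^{p_c}}^2$. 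The $\chi_0$ term is controlled directly by Lemma~\ref{lem:away}.

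For $j\geq1$, each $f_j$ is Schwartz, because $\chi_j\in C_c^\infty$ and $f\in\Sch$. Its Fourier transform is $\chi_j\widehat f$, which vanishes on $\Sn$ pointwise since $\widehat f|_{\Sn}=0$ there; the trace of a Schwartz function is smooth, so vanishing in $L^2(\Sn)$ means pointwise vanishing. Thus $\Tr f_j=0$.

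Next fix the rotation $O_j$ that maps the $j$th neighborhood to the north-pole patch and set $h_j=f_j\circ O_j^{-1}$. Rotations commute with the Fourier transform, preserve $\Sn$, the weight $\abs{1-\abs\xi^2}$ and $L^{p_c}$ norms, and keep the Schwartz class. Choosing the neighborhoods in advance so that after rotation they lie in $K\times(1/2,3/2)$, with $K\subset\{\abs\eta\leq r\}$ for a fixed $r<1$, we get $\supp\widehat{h_j}\subset K\times(1/2,3/2)$ and $\Tr h_j=0$. Corollary~\ref{cor:patch} then yields $\Q(f_j)=\Q(h_j)\leq C\norm{h_j}_{p_c}^2=C\norm{f_j}_{p_c}^2$. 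Finally \eqref{eq:multiplier} gives $\norm{f_j}_{p_c}\leq\norm{k_j}_{L^1}\norm f_{p_c}$, with $\norm{k_j}_{L^1}$ depending only on the fixed choices, hence only on $n$.

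The main obstacle is bookkeeping rather than analysis. We must check that the neighborhoods and patch data ($K$, $\theta$, $\widetilde\phi$, $r$) can be fixed once so that every rotated $\supp\chi_j$ fits the hypotheses of Corollary~\ref{cor:patch}. Since $\supp\chi_j$ is compact and lies in a thin shell around a small cap, we can take $K$ to be the closed projection of that cap and shrink the shell width below $1/2$. Compactness of $\Sn$ then gives finitely many such caps, so all constants depend only on $n$.
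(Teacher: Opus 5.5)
Your proposal is correct and is essentially the paper's proof: the same partition \eqref{eq:partition}, the Cauchy--Schwarz bound with factor $J+1$, Lemma~\ref{lem:away} for the $\chi_0$ piece, and Corollary~\ref{cor:patch} after a fixed rotation combined with \eqref{eq:multiplier} for $j\geq1$. Your extra bookkeeping supplies what the paper leaves implicit, with one small precision: $K$ should be the $\eta$-projection of the rotated support $O_j\operatorname{supp}\chi_j$ (the thickened cap, not only the spherical cap), and this projection still lies in some $\{|\eta|\leq r\}$ with $r<1$.
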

\begin{proof}
Set $h_j=\chi_j(D)f$ for $0\leq j\leq J$.
For $j\geq1$, $h_j$ is Schwartz and its spherical trace vanishes:
\[
 \widehat h_j|_{\mathbb{S}^{n-1}}=\chi_j|_{\mathbb{S}^{n-1}}\widehat f|_{\mathbb{S}^{n-1}}=0.
\]
Fixed rotations preserve the $L^{p_c}$ norm, the spherical weight,
and the Fourier normalization. Corollary~\ref{cor:patch} and
\eqref{eq:multiplier} therefore give
$\Q(h_j)\leq C_j\norm{k_j}_{L^1(\mathbb{R}^n)}^2\norm f_{L^{p_c}(\mathbb{R}^n)}^2$ for $j\geq1$.
For $j=0$, Lemma~\ref{lem:away} gives
$\Q(h_0)\leq C_n\norm f_{L^{p_c}(\mathbb{R}^n)}^2$ directly.

By \eqref{eq:partition},
$\widehat f=\sum\limits_{j=0}^J\widehat h_j$.
The finite-dimensional Cauchy--Schwarz inequality gives
\[
 \abs{\widehat f(\xi)}^2
 \leq(J+1)\sum_{j=0}^J\abs{\widehat h_j(\xi)}^2.
\]
Multiplying by the nonnegative weight and integrating yields
$\Q(f)\leq(J+1)\sum\limits_{j=0}^J\Q(h_j)\leq C_n\norm f_{L^{p_c}(\mathbb{R}^n)}^2$.
The finite collection of constants is fixed in advance.
\end{proof}

\subsection{The density theorem and the final limit}
The passage from Schwartz functions to general $L^{p_c}$ functions
uses Lemma~\ref{known:density}; its proof is not repeated.
Ordinary Schwartz approximation alone would not suffice here,
because it need not preserve exact zero trace.

\begin{proof}[Proof of Theorem~\ref{thm:main}]
Let $f\in L^{p_c}(\R^n)$ satisfy $\Tr f=0$.
By Lemma \ref{known:density}, choose $f_j\in\Sch(\R^n)$
with $\Tr f_j=0$ and $\lim\limits_{j\to\infty}\norm{f_j-f}_{L^{p_c}(\mathbb{R}^n)}=0$.
Proposition~\ref{prop:schwartz} gives the uniform bound
\begin{equation}\label{eq:approximatebound}
 \Q(f_j)\leq C_n\norm{f_j}_{L^{p_c}(\mathbb{R}^n)}^2.
\end{equation}
Hausdorff--Young gives
$\norm{\widehat f_j-\widehat f}_{L^{p_c'}(\mathbb{R}^n)}\leq C_n\norm{f_j-f}_{L^{p_c}(\mathbb{R}^n)}\to0$.
Since $p_c'<\infty$, a subsequence
$\widehat f_{j_k}$ converges to $\widehat f$ almost everywhere.

Let $w(\xi)=\abs{1-\abs\xi^2}^{-1}$ off $\mathbb{S}^{n-1}$ and set
$w=0$ on $\mathbb{S}^{n-1}$. This is a fixed, finite almost-everywhere,
nonnegative function. Fatou's lemma and
\eqref{eq:approximatebound} give
\begin{align*}
 \Q(f)
 &=\int_{\mathbb{R}^n} w(\xi)\abs{\widehat f(\xi)}^2\dd\xi\\
 &\leq\liminf_{k\to\infty}
       \int_{\mathbb{R}^n} w(\xi)\abs{\widehat f_{j_k}(\xi)}^2\dd\xi\\
 &\leq C_n\lim_{k\to\infty}\norm{f_{j_k}}_{L^{p_c}(\mathbb{R}^n)}^2
  =C_n\norm f_{L^{p_c}(\mathbb{R}^n)}^2.
\end{align*}
The last equality follows from strong $L^{p_c}$ convergence.
No integrable majorant for the singular weight is required by
Fatou's lemma. The Hilbert-valued integrals and logarithmic
identities were used only for Schwartz functions, before this
limiting step.
\end{proof}

\end{document}